\documentclass[preprint]{elsarticle}
\usepackage{amsmath, amsfonts, amsthm, mathrsfs, amssymb, fancyhdr, graphicx, cancel, multicol, graphicx, subfig, enumitem, pgffor, enumitem}

\newtheorem{thm}{Theorem}
\newtheorem{lem}{Lemma}
\newtheorem*{def1}{Definition}
\newtheorem*{conj}{Conjecture}
\newtheorem{cor}{Corollary}

\begin{document}

\title{Deep Holes in Reed-Solomon Codes Based on Dickson Polynomials}

\author{Matt Keti}
\ead{mketi@uci.edu}

\author{Daqing Wan}
\ead{dwan@math.uci.edu}

\address{Department of Mathematics, University of California Irvine, CA 92697-3875, USA}

\begin{abstract}
For an $[n,k]$ Reed-Solomon code $\mathcal{C}$, it can be shown that any received word $r$ lies a distance at most $n-k$ from $\mathcal{C}$, denoted $d(r,\mathcal{C})\leq n-k$. Any word $r$ meeting the equality is called a deep hole. Guruswami and Vardy (2005) showed that for a specific class of codes, determining whether or not a word is a deep hole is NP-hard. They suggested passingly that it may be easier when the evaluation set of $\mathcal{C}$ is large or structured. Following this idea, we study the case where the evaluation set is the image of a Dickson polynomial, whose values appear with a special uniformity. To find families of received words that are not deep holes, we reduce to a subset sum problem (or equivalently, a Dickson polynomial-variation of Waring's problem) and find solution conditions by applying an argument using estimates on character sums indexed over the evaluation set.
\end{abstract}

\begin{keyword}
character sum \sep deep hole \sep Dickson polynomial \sep Reed-Solomon code \sep subset sum \sep Waring's problem
\end{keyword}

\maketitle

\section{Introduction}

Reed-Solomon error-correcting codes are used routinely in technological applications when there is a risk for transmitted data to be lost or corrupt. The classical set-up fixes a finite field $\mathbb{F}_q$, message block length $k$, and subset of $\mathbb{F}_q$ of size $n>k$ denoted $D=\{x_1, x_2,\ldots,x_n\}$. $D$ is often referred to as the evaluation set with typical choices $D=\mathbb{F}_q$ or $\mathbb{F}_q^*$. A message $(m_0, m_1,\ldots, m_{k-1})$ is represented by the polynomial $m(x)=m_0+m_1x+\ldots+m_{k-1}x^{k-1}$. The message is encoded by calculating $(m(x_1),m(x_2),\ldots,m(x_n))$, called a codeword. The set of all possible encoded messages is defined as the codebook and is denoted by $\mathcal{C}$.

Let $a=(a_1,a_2,\ldots, a_n)$ and $b=(b_1, b_2, \ldots,b_n)$ be two words. Define the Hamming distance $d(a,b)$ as the number of coordinates in which $a$ and $b$ differ. The distance between a word $u$ and the codebook $\mathcal{C}$ is defined as $d(u,\mathcal{C})=\min_{v\in \mathcal{C}} d(u,v)$. It is well known that for Reed-Solomon codes, $d(u,\mathcal{C})\leq n-k$ for any received word $u$. In studying the error-correcting capacity for Reed-Solomon codes, Guruswami and Vardy in \cite{GV} found that for a special family of codes with a small evaluation set, determining whether or not $d(u,\mathcal{C})=n-k$ for a given $u$ is NP-hard. They called any word $u$ satisfying the equality a deep hole and suggested in passing that finding deep holes might be easier when the evaluation set is large. We will further investigate the problem of finding deep holes.

\section{Overview of Previous Work}

One way to measure $d(u,\mathcal{C})$ is to run Lagrange Interpolation on the word $u=(u_1, \cdots, u_n)$ to get a 
fitted polynomial $u(x)$ satisfying $u(x_i) =u_i$ for all $1\leq i\leq n$. Then, if $\deg u(x)\leq k-1$, then $u$ is a codeword and $d(u,\mathcal{C})=0$. Otherwise, $k\leq \deg u(x)\leq n-1$, and Li and Wan in \cite{LW2} gave the bound
\begin{equation*}
n-\deg u(x)\leq d(u,\mathcal{C})\leq n-k
\end{equation*}

\noindent which shows that if $\deg u(x)=k$, then $u$ is automatically a deep hole. Many of the results toward the deep hole problem are geared toward examining families of words by degree.

\subsection{For $D=\mathbb{F}_q$}

The choice of $D=\mathbb{F}_q$ is referred to as a standard Reed-Solomon code. Cheng and Murray in \cite{CM} searched for deep holes in this context, and conjectured that the only deep holes were those satisfying $\deg u(x)=k$.  More precisely,

\begin{conj}[Cheng-Murray]
All deep holes for standard Reed-Solomon codes are those words $u$ satisfying $\deg u(x)=k$.
\end{conj}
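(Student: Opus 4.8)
The plan is to attack the conjecture in its contrapositive form: show that every received word $u$ whose interpolation polynomial satisfies $k+1 \le \deg u(x) \le n-1$ fails to be a deep hole. Since $D=\mathbb{F}_q$ here, we have $n=q$, and by the interpolation picture of Section 2 one has
\begin{equation*}
d(u,\mathcal{C}) = n - \max_{\deg g \le k-1}\bigl|\{x \in \mathbb{F}_q : u(x) = g(x)\}\bigr|.
\end{equation*}
Thus $u$ is \emph{not} a deep hole precisely when there is a polynomial $g$ with $\deg g \le k-1$ such that $u(x)-g(x)$ acquires at least $k+1$ roots in $\mathbb{F}_q$. The whole problem therefore reduces to producing, for each admissible $u$, one such ``correcting'' polynomial $g$.

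First I would isolate the cleanest case $\deg u(x)=k+1$. Writing $u(x)=a_{k+1}x^{k+1}+a_k x^k+\cdots$, any $g$ of degree $\le k-1$ leaves the top two coefficients untouched, so $u-g$ must factor as $a_{k+1}\prod_{i=1}^{k+1}(x-z_i)$ with the $z_i$ distinct. Matching the coefficient of $x^k$ forces $\sum_{i=1}^{k+1} z_i = -a_k/a_{k+1}$, while the remaining coefficients are absorbed freely into $g$. Hence $u$ is not a deep hole as soon as $\mathbb{F}_q$ contains a $(k+1)$-element subset with a prescribed sum --- exactly the subset sum problem advertised in the abstract. For higher degrees $\deg u(x)=d$ I would peel off the same $k+1$ linear factors and let a free cofactor $h(x)$ of degree $d-k-1$ absorb the lower coefficients, which turns the matching of the top $d-k+1$ coefficients of $u$ into a system in the elementary symmetric functions of the chosen roots; the $d=k+1$ case is the prototype, and I would try to reduce the general case to it or to an analogous symmetric-function constraint.

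The existence of a valid root set is then a counting question, and here I would follow the character-sum route. Writing the number of admissible distinct $(k+1)$-subsets as an orthogonality sum over the additive characters $\psi$ of $\mathbb{F}_q$, the trivial character contributes the main term $\binom{q}{k+1}/q$, and each nontrivial $\psi$ contributes a term controlled by an exponential sum over the evaluation set. Bounding these nontrivial contributions by the Weil estimates, and using a Li--Wan style sieve to enforce distinctness of the coordinates, should show that the count is strictly positive, and hence that a correcting $g$ exists, whenever $q$ is large compared with the relevant parameters.

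The main obstacle is uniformity over the full parameter range. The Weil bound produces an error term that grows with the number of coefficient constraints --- effectively with the degree excess $d-k$ and with $k$ itself --- so the argument as sketched only delivers the conclusion when $k$ (or $d-k$) is small relative to $q$; it degrades badly when $k$ approaches $q$ or when $d$ is close to $n-1$. Closing this gap for every $k$ and every degree $d\in\{k+1,\dots,n-1\}$, where the subset sum can become genuinely constrained, is the crux of the conjecture, and I expect the character-sum method alone to fall short there --- making the extremal ranges, rather than the generic ones, the true heart of the problem.
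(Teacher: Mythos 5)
You are attempting to prove a statement that the paper records only as an \emph{open conjecture}: it is attributed to Cheng and Murray, and the paper contains no proof of it. What the paper does is survey partial results toward it --- Cheng--Murray's Theorem for $1<k<p^{1/4-\varepsilon}$ and degree excess below $p^{3/13-\varepsilon}$, Cheng--Li--Zhuang's resolution when $k+1\leq p$ or $3\leq q-p+1\leq k+1\leq q-2$ (characteristic $p>2$), and Li--Zhu's explicit distance formulas for degrees $k+1$ and $k+2$. So there is no ``paper proof'' to compare against, and a complete argument from you would constitute a new research result rather than a reconstruction.

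Your sketch is, in fact, a faithful account of the standard attack, and it is sound as far as it goes: the agreement formulation $d(u,\mathcal{C})=n-\max_{\deg g\leq k-1}|\{x: u(x)=g(x)\}|$, the reduction of the degree-$(k+1)$ case to a distinct-coordinate subset sum $\sum_{i=1}^{k+1} z_i=-a_k/a_{k+1}$ (this is exactly the paper's Lemma \ref{errorD} reduction, which the paper then runs for Dickson-polynomial evaluation sets), and the orthogonality-plus-Weil-plus-Li--Wan-sieve count. But the obstacle you name in your last paragraph is fatal to the conjecture as a whole, not a loose end: positivity of the solution count requires the main term $\frac{1}{q}(q)_{k+1}$ to dominate a sieve bound of the shape $\bigl(\frac{q}{2}+k+O(\sqrt{q})\bigr)_{k+1}$, which forces $k\gtrsim \log_2 q$ on one side and $k<cq$ on the other; and for general degree $d=\deg u$, matching the top $d-k$ coefficients means the orthogonality sum runs over $q^{d-k-1}$ nontrivial character tuples, so the Weil error grows with the degree excess and the method collapses when $d$ approaches $q-1$ or $k$ leaves the admissible window. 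Those extremal ranges --- small $k$, $k$ near $q$, $d$ near $n-1$, and all of characteristic $2$ (where the Zhang--Fu--Liao family for $D=\mathbb{F}_q^*$ shows unexpected deep holes really do appear in nearby settings) --- are precisely where the conjecture remains open. In short: your proposal correctly reproduces the known machinery and correctly diagnoses its limits, but it does not prove the statement, and no argument in this paper or the cited literature does either.
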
 

\noindent They weren't able to prove this, but they were able to reduce the problem to finding a rational point on an algebraic hypersurface to derive the first result on deep holes for Reed-Solomon codes over prime field $\mathbb{F}_p$:

\begin{thm}[Cheng-Murray]
Let $p$ be a prime and $1<k<p^{1/4-\varepsilon}$ be a positive integer. Consider the standard Reed-Solomon code $\mathcal{C}_p$, and let $u$ be a received word and $u(x)$ be its interpolated polynomial. If the degree of $u(x)$ satisfies
\begin{equation*}
k<\deg u(x)<k+p^{3/13-\varepsilon}
\end{equation*}
then $u$ is not a deep hole.
\end{thm}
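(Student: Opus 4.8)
The plan is to translate the deep-hole question for a received word $u$ of interpolated degree $d = \deg u(x)$ into a counting problem about codewords at the maximal distance, and then show that the relevant count is positive whenever $d$ is not too large relative to $k$. Concretely, $u$ fails to be a deep hole precisely when there is some codeword (a polynomial of degree $\le k-1$) that agrees with $u$ on at least $k+1$ of the $n = p$ evaluation points; equivalently, after subtracting, one seeks a polynomial $f(x) = u(x) - m(x)$ with $\deg f = d$ whose number of roots in $\mathbb{F}_p$ is at least $k+1$. So the first step is to set up the reduction: fix the top coefficients coming from $u(x)$ and ask whether one can choose the lower-degree coefficients (the free message part of degree $\le k-1$) so that $f$ has at least $k+1$ distinct roots in $\mathbb{F}_p$.

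Next I would encode ``$f$ has a prescribed set $S$ of $k+1$ roots'' geometrically. Writing $f(x) = \prod_{s \in S}(x-s) \cdot g(x)$ forces $\deg g = d - (k+1)$, and matching against the requirement that $f(x) - u(x)$ have degree $\le k-1$ imposes a system of algebraic equations on the elementary symmetric functions of the chosen root set $S$. Following Cheng--Murray, the cleanest route is to phrase the existence of such an $S$ as the existence of an $\mathbb{F}_p$-rational point on an explicit affine variety (a hypersurface, or an intersection cut out by the top $d - k + 1$ symmetric-function conditions) lying in the region where the $k+1$ coordinates are distinct. The second step is therefore to write down this variety precisely and verify that it is absolutely irreducible of the expected dimension, so that the Lang--Weil / Weil bounds apply.

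The third step is the main estimate: apply Weil-type bounds to count $\mathbb{F}_p$-points on the variety and show the count is $\gg p^{(\dim)} $ with an error term of lower order, forcing a genuine point to exist. The constraint $k < p^{1/4-\varepsilon}$ and $d - k < p^{3/13-\varepsilon}$ should be exactly what is needed to guarantee that the main term dominates the error, and additionally that among the points counted, those with a repeated coordinate (which must be excluded, since the roots must be distinct) form a negligible fraction. I expect the main obstacle to be precisely this last point: proving absolute irreducibility of the variety and controlling the ``bad locus'' where coordinates collide, since a naive point count includes degenerate configurations, and the delicate exponent $3/13$ strongly suggests the error analysis is tight and depends on a careful singularity/dimension count rather than a soft application of Lang--Weil. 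A secondary difficulty is ensuring uniformity of the irreducibility over the whole admissible range of $d$, rather than for a single fixed degree.
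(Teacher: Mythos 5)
First, context: the paper does not prove this statement at all --- it is quoted as background (attributed to Cheng--Murray \cite{CM}), accompanied only by the remark that Cheng and Murray ``reduce the problem to finding a rational point on an algebraic hypersurface.'' Your outline follows exactly the strategy the paper attributes to the original authors: translate ``$u$ is not a deep hole'' into the existence of a polynomial of degree $\leq k-1$ agreeing with $u$ at $k+1$ distinct points, encode this via symmetric functions as a rational point on an affine variety, and produce such a point by a Weil-type count in which the locus of repeated coordinates is shown to be negligible. So in direction you are aligned with the source.

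As a proof, however, there is a genuine gap: the two load-bearing steps are announced rather than carried out, and you flag them yourself as the ``main obstacle.'' (a) Absolute irreducibility of the variety, uniformly over the whole admissible range of $\deg u$, is the hard content here, and it does not follow from a routine singularity/dimension count of the symmetric-function system; in the literature it is supplied by the Cheng--Wan machinery \cite{CW1}, which counts factorizations $h(x)=f(x)g(x)$ with $f$ a product of distinct linear factors via character-sum and $L$-function estimates (building on Wan's results on polynomials with prescribed coefficients \cite{W}), not by direct smoothness analysis. (b) The exponents $1/4$ and $3/13$ cannot be recovered from a soft Lang--Weil application, as you partly anticipate: they are artifacts of explicit error terms. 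In the Cafure--Matera-type estimate for an absolutely irreducible hypersurface of degree $\delta$ in $m$ variables, $|N-p^{m-1}|\leq (\delta-1)(\delta-2)p^{m-3/2}+c\,\delta^{13/3}p^{m-2}$, the $\delta^{13/3}$ term is visibly the source of the $p^{3/13}$ threshold (with $\delta$ comparable to $\deg u-k$), while the hypothesis $k<p^{1/4-\varepsilon}$ traces to an explicit term of size roughly $k^{4}$ in the Cheng--Wan counting theorem, whose hypotheses read roughly $q>\max\bigl(k^{4+\varepsilon},(\deg u-k)^{13/3+\varepsilon}\bigr)$. Until those two inputs are proved or correctly invoked, your write-up is a plausible plan rather than a proof. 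One concrete simplification if you pursue it: a codeword agreeing with $u$ at distinct $x_1,\ldots,x_{k+1}$ exists if and only if the divided difference $u[x_1,\ldots,x_{k+1}]=\sum_i u(x_i)\prod_{j\neq i}(x_i-x_j)^{-1}$ vanishes; for $u(x)=\sum_j a_jx^j$ this equals $\sum_j a_j\,h_{j-k}(x_1,\ldots,x_{k+1})$ with $h_i$ the complete homogeneous symmetric polynomials, a \emph{single} hypersurface of degree $\deg u-k$ in $k+1$ variables, which avoids your auxiliary factor $g$ and the system of $d-k+1$ coefficient conditions and is the natural object to which the explicit point-count applies.
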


\noindent In one of the newest papers, Cheng, Li, and Zhuang \cite{CLZ} were able to resolve in some cases the conjecture over $\mathbb{F}_p$ using the concept of deep hole trees.

\begin{thm}[Cheng-Li-Zhuang]
Let $p>2$ be a prime number, $k\geq \frac{p-1}{2}$, $D=\{\alpha_1, \alpha_2, \ldots, \alpha_n\}$ with $k<n\leq p$. The only deep holes of $\mathcal{C}_p$ are generated by functions which are equivalent to the following:
\begin{equation*}
f(x)=x^k, \hspace{1cm} f_\delta (x)=\frac{1}{x-\delta}
\end{equation*}
where $\delta \in \mathbb{F}_p \backslash D$. Here, two functions $f(x)$ and $g(x)$ are equivalent if and only if there exists $a\in \mathbb{F}_p^*$ and $h(x)$ with degree less than $k$ such that
\begin{equation*}
g(x)=a f(x) + h(x)
\end{equation*}
\end{thm}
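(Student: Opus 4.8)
\textbf{The plan} begins by restating the deep-hole condition in terms of root counts. Write $L_D(x)=\prod_{\alpha\in D}(x-\alpha)$ and let $u(x)$ be the monic interpolation of $u$, of degree $d$ with $k\le d\le n-1$. A codeword $g$ (with $\deg g<k$) agrees with $u$ at $\alpha$ exactly when $u(x)-g(x)$ vanishes there, so $d(u,\mathcal{C})=n-\max_{\deg g<k}N_D\big(u(x)-g(x)\big)$, where $N_D$ counts roots lying in $D$. Since any $k$ points of $D$ can be matched by a polynomial of degree $<k$, this maximum is always at least $k$; hence $u$ is a deep hole if and only if \emph{no} polynomial congruent to $u(x)$ modulo $\{\deg<k\}$ has more than $k$ roots in $D$. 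In this language the equivalence $g=af+h$ is exactly the symmetry of the problem (adding a codeword, scaling by a unit), which is why I may normalize $u(x)$ to be monic. The forward direction is then immediate and needs no hypothesis on $k$: if $\deg u(x)=k$ every coset member has degree $k$ and hence at most $k$ roots; and if $u(x)$ interpolates $(x-\delta)^{-1}=(x-\delta)^{p-2}$, any agreement $g(\alpha)=(\alpha-\delta)^{-1}$ forces $\alpha$ to be a root of $(x-\delta)g(x)-1$, a nonzero polynomial of degree $\le k$, so again at most $k$ roots. Both families are therefore deep holes.

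\textbf{The converse} carries the real content. Assume $u$ is a deep hole with monic interpolation of degree $d$. If $d=k$ then $u\sim x^k$ and we are done, so suppose $k<d\le n-1$ and set $m=d-k\ge 1$. I would argue the contrapositive: if $u(x)$ is equivalent to no $(x-\delta)^{-1}$, then some coset member has at least $k+1$ roots in $D$. To build such a member I look for a $(k+1)$-subset $S\subseteq D$, with $A_S(x)=\prod_{\gamma\in S}(x-\gamma)$, and a monic $B(x)$ of degree $m-1$ satisfying $A_SB\equiv u(x)\pmod{\{\deg<k\}}$; the product $P:=A_SB$ is then monic of degree $d$, congruent to $u(x)$, and vanishes on all $k+1$ points of $S$. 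The key device is the complementary factorization inside $\mathbb{F}_p$: since $S$ consists of distinct field elements, $A_S\mid x^p-x$, say $x^p-x=A_S(x)E_S(x)$ with $\deg E_S=p-(k+1)$. Multiplying the desired identity by $E_S$ converts the search for roots into a rigid coefficient comparison,
\[
P(x)\,E_S(x)=B(x)\,(x^p-x).
\]

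\textbf{Where $k\ge (p-1)/2$ enters, and the hard part.} This hypothesis is precisely what keeps the two auxiliary factors short: $\deg E_S=p-k-1\le\frac{p-1}{2}\le k$, and $\deg B=m-1\le n-2-k<\frac{p-1}{2}$. Consequently, writing $P=u(x)+g$ with $\deg g<k$, both $gE_S$ and $Bx$ have degree below $p-1$, so reading off the coefficient of $x^j$ for $p-1\le j\le p+m-1$ in the displayed identity kills those terms and yields the small system $[u(x)E_S(x)]_{p-1}=0$ together with $[u(x)E_S(x)]_{p+i}=[B]_i$ for $0\le i\le m-1$. The plan is to solve this: the shortness forced by $k\ge\frac{p-1}{2}$ leaves essentially one degree of freedom, and I expect the only monic solutions $P$ to be those for which $u(x)$ is equivalent to some $(x-\delta)^{-1}$ with $\delta\in\mathbb{F}_p\setminus D$ among the roots of $E_S$ outside $D$. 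Organizing the admissible solutions is naturally phrased as a \emph{deep hole tree}, rooted at the degree-$k$ holes and grown by adjoining linear factors $(x-\gamma)$, $\gamma\in D$, with the coefficient relations above dictating which branches survive. I expect the main obstacle to be exactly this classification: showing the rigid system admits \emph{no} solutions beyond the reciprocals — ruling out root sets with $\deg A_S>k+1$ or with repeated roots, and excluding any exotic intermediate-degree deep hole. That is the step which genuinely consumes the bound $k\ge\frac{p-1}{2}$; without it $E_S$ and $B$ lengthen enough to admit extra solutions, consistent with the conjecture remaining open for small $k$.
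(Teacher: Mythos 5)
First, a point of scope: this theorem is not proved in the paper at all --- it is quoted as background from Cheng--Li--Zhuang \cite{CLZ}, so there is no in-paper proof to match your attempt against; the comparison below is against what a complete proof must contain. Your forward direction is essentially fine: the degree-$k$ case is immediate, and for $f_\delta$ the observation that agreement at $\alpha$ forces $(x-\delta)g(x)-1$ (nonzero, degree $\le k$) to vanish at $\alpha$ correctly caps agreement at $k$. You should still verify that $f_\delta$ is not itself a codeword, i.e.\ that its interpolation on $D$ has degree $\ge k$; this is easy (the leading coefficient is $\sum_i 1/\bigl((\alpha_i-\delta)L'(\alpha_i)\bigr)=-1/L(\delta)\ne 0$ with $L(x)=\prod_{\alpha\in D}(x-\alpha)$, so the degree is exactly $n-1$), but it is not in your sketch. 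Your reduction for the converse is also sound as far as it goes: with $x^p-x=A_S(x)E_S(x)$, the degree bounds $\deg E_S=p-k-1\le k$ and $\deg B=m-1<\tfrac{p-1}{2}$ do kill the cross terms $gE_S$ and $Bx$ above degree $p-2$, so for each $(k+1)$-subset $S$ the existence of a suitable coset member is equivalent to the single condition $[u(x)E_S(x)]_{p-1}=0$ (with $B$ then determined by the coefficients $[uE_S]_{p+i}$).

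The genuine gap is that the theorem lives entirely in the step you defer with ``I expect'': you must show that if $[u(x)E_S(x)]_{p-1}\ne 0$ \emph{simultaneously} for all $\binom{n}{k+1}$ subsets $S\subseteq D$, then $u$ is equivalent to some $(x-\delta)^{-1}$ with $\delta\in\mathbb{F}_p\setminus D$. Nothing in your sketch supplies a mechanism for this global rigidity statement; ``essentially one degree of freedom'' is a heuristic about a single $S$, not an argument about the full family of conditions, and it is exactly here --- not in the degree bookkeeping --- that the hypothesis $k\ge\tfrac{p-1}{2}$ must do real work. The actual Cheng--Li--Zhuang argument is organized in the opposite direction from your tree: their deep hole tree is rooted at the full evaluation set $D=\mathbb{F}_p$, where the classification of deep holes for $k\ge\tfrac{p-1}{2}$ is already known (only the $x^k$-family survives), and they induct by \emph{deleting} points from the evaluation set, tracking precisely how the new deep holes $f_\delta$ are born as $\delta$ leaves $D$; your proposal instead grows subsets by adjoining linear factors $(x-\gamma)$ and has neither a base case nor an inductive step that transfers deep-hole status between evaluation sets. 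Without that induction (or some substitute, e.g.\ a counting or character-sum argument strong enough to rule out all other coset representatives of every degree $k<d\le n-1$), the proposal is a correct reformulation of the problem rather than a proof.
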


\noindent Over a general finite field $\mathbb{F}_q$, they also showed

\begin{thm}
Given a finite field $\mathbb{F}_q$ with characteristic $p>2$, if $k+1\leq p$ or $3\leq q-p+1\leq k+1\leq q-2$, then the Cheng-Murray conjecture is true.
\end{thm}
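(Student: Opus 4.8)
The plan is to prove the conjecture in its contrapositive form: for every received word $u$ whose interpolated polynomial $f=u(x)$ has degree $d$ with $k+1\le d\le q-1$, I must exhibit a codeword that agrees with $u$ in at least $k+1$ coordinates, so that $d(u,\mathcal{C})\le n-k-1$ and $u$ is not a deep hole. The starting point is the interpolation criterion: for a $(k+1)$-element subset $S\subseteq \mathbb{F}_q$, Lagrange interpolation yields the unique polynomial of degree $\le k$ matching $f$ on $S$, and its leading ($x^k$) coefficient is the divided difference
\begin{equation*}
f[S]=\sum_{z\in S}\frac{f(z)}{\prod_{w\in S,\,w\ne z}(z-w)}.
\end{equation*}
Since $\deg f=d>k-1$, the word $u$ fails to be a deep hole if and only if there is a $(k+1)$-subset $S$ with $f[S]=0$: in that case the interpolant through $S$ has degree $\le k-1$ and is a genuine codeword agreeing with $u$ on all of $S$.

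Next I would translate $f[S]=0$ into a symmetric-function (subset-sum / Waring-type) equation. For a monomial $f(x)=x^d$ one has the classical identity $f[S]=h_{d-k}(S)$, the complete homogeneous symmetric polynomial of degree $d-k$ evaluated on the elements of $S$; for general $f$ the condition is the single $\mathbb{F}_q$-linear relation $\sum_{i\ge k}a_i\,h_{i-k}(S)=0$, whose leading form is $a_d\,h_{d-k}$. The case $d=k+1$ is literally the subset-sum equation $\sum_{z\in S}z=0$. To prove existence of a valid $S$ I would count the qualifying subsets by expanding the indicator through additive characters $\psi$,
\begin{equation*}
N=\frac{1}{q}\sum_{t\in\mathbb{F}_q}\sum_{S}\psi\bigl(t\,f[S]\bigr)=\frac{1}{q}\binom{q}{k+1}+\frac{1}{q}\sum_{t\ne 0}\sum_{S}\psi\bigl(t\,f[S]\bigr),
\end{equation*}
and bound the off-diagonal error by the Li--Wan distinct-coordinate sieve together with Weil's estimate for the resulting character sums over $\mathbb{F}_q^{\,k+1}$, aiming to show $N>0$.

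The two hypotheses correspond to the two regimes where the main term $\binom{q}{k+1}/q$ provably beats the error. For the large-degree window $3\le q-p+1\le k+1\le q-2$, I would dualize to the complement $T=\mathbb{F}_q\setminus S$ of size $e=q-k-1$. Using $\prod_{z\in\mathbb{F}_q}(1-zt)=1-t^{q-1}$ (valid for $q$ odd) and the generating identity $\sum_m h_m(S)t^m=E_T(t)/(1-t^{q-1})$, the series collapses for $1\le d-k\le e$ to
\begin{equation*}
h_{d-k}(S)=(-1)^{d-k}\,e_{d-k}(T),
\end{equation*}
so the task becomes finding an $e$-subset $T$ with $e_{d-k}(T)=0$. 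The stated range is exactly where $2\le e\le p-1$, so every index $d-k\le e$ stays below $p$, the elementary symmetric polynomial is non-degenerate modulo $p$, and $q$ dwarfs $e$, making the character-sum count strictly positive.

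The regime $k+1\le p$ is harder, because here the complement is large and no shrinking is available; this small-$k$, large-$d$ corner is where I expect the main obstacle. The idea I would pursue is to reduce the exponent rather than the set: restricting to $S$ with $0\notin S$ gives the periodicity $p_{i+q-1}(S)=p_i(S)$ for the power sums, and Newton's identities — invertible modulo $p$ precisely because $k+1\le p$ — express $h_{d-k}(S)=0$ through a bounded amount of power-sum data, reducing the character sum to one of controlled degree amenable to the Weil bound. Finally I would dispose of the degenerate contributions and boundary cases by hand: tuples with repeated coordinates are not genuine $(k+1)$-subsets and must be subtracted off (this is exactly the role of the distinct-coordinate sieve), while the extreme degree $d=q-1$ and the smallest fields are checked directly. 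The crux throughout is \emph{uniform} error control: because the degree of the underlying character variety grows with $d-k$, forcing $\sum_{t\ne0}\sum_{S}\psi(t\,f[S])$ strictly below $\binom{q}{k+1}$ for the \emph{entire} admissible range of $d$, rather than for one convenient degree, is what pins the argument to the two numeric windows $k+1\le p$ and $q-p+1\le k+1\le q-2$.
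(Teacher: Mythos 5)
A point of orientation first: the paper contains no proof of this statement. It is quoted verbatim from Cheng, Li, and Zhuang \cite{CLZ}, and the paper explicitly attributes their method to the concept of \emph{deep hole trees} --- an explicit inductive classification of deep holes --- not to any counting argument. So your proposal cannot be compared against an in-paper proof; the question is whether your character-sum plan could independently establish the statement, and it cannot, for a reason this very paper illustrates. Your reduction via the divided difference $f[S]$ is sound and standard (it is the same reduction underlying Lemma \ref{errorD}), but the sieve-plus-Weil strategy you then invoke is precisely the method of Li--Wan \cite{LW2}, Zhu--Wan \cite{WZ}, and of the main theorem of this paper, and its error term grows with the degree gap $d-k$: the Weil bound for $\sum_S \psi(t\,f[S])$ carries a factor linear in $\deg f$, so after the Li--Wan sieve the main term $\binom{q}{k+1}/q$ beats the error only when $d-k$ is small relative to $q$ (in this paper $d-k=1$; in \cite{WZ} roughly $d-k=O(\sqrt{q})$ with $k$ in a restricted window). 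The Cheng--Li--Zhuang statement requires \emph{every} degree $k+1\le d\le q-1$ simultaneously, including $d-k$ comparable to $q$, where the character-sum estimate is vacuous. You correctly flag uniform error control as ``the crux,'' but the two devices you propose to rescue it both fail.

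Concretely: (i) in the regime $k+1\le p$ you invoke Newton's identities, claiming invertibility modulo $p$ ``precisely because $k+1\le p$.'' Passing between $h_m$ and power sums requires dividing by integers up to $m=d-k$, so the relevant condition is $d-k<p$ --- a condition on the degree gap, not on $k$ --- and for $q=p^s$ with $s\ge 2$ the gap can far exceed $p$. The periodicity $p_{i+q-1}(S)=p_i(S)$ never engages, since $d-k\le q-1-k<q-1$; and even where the reduction is legal it does not lower the degree of the polynomials inside the character sum, so nothing is gained for the Weil bound. (ii) In the complementation regime, the identity $h_{d-k}(S)=(-1)^{d-k}e_{d-k}(T)$ is fine (it holds for even $q$ too), but for a general word the condition is the mixed relation $\sum_{i\ge k} a_i(-1)^{i-k}e_{i-k}(T)=0$ rather than the vanishing of a single symmetric function, and your conclusion that the count is ``strictly positive'' because $2\le e\le p-1$ and ``$q$ dwarfs $e$'' is asserted, not proved: additive character sums of elementary symmetric polynomials of index near $e=|T|$ (the extreme case being a full product of the variables) do not admit the uniform square-root cancellation in $t$ and in the index that your main-term-versus-error comparison needs, and no estimate in the sketch supplies it. The quoted theorem rests on the deep-hole-tree classification of \cite{CLZ}, which sidesteps counting altogether; your route, as far as is known, cannot reach it.
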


\noindent Further evidence in favour of the conjecture has been provided by Li-Wan \cite{LW2}, Liao \cite{L}, Zhu-Wan \cite{WZ}, using polynomial congruences and estimates of character sums, Cafure-Matera-Privitelli \cite{CMP} using algebraic geometry, and Li-Zhu \cite{LZ} by explicitly counting solutions to certain polynomial equations. To list the one by Li-Zhu,

\begin{thm}[Li-Zhu]
Let $\mathcal{C}_q$ be a standard Reed-Solomon code, and $u$ a received word represented by the polynomial $u(x)=x^{k+1}-bx^k+v(x)$, where $\deg(v(x))\leq k-1$. Then
\begin{equation*}
d(u,\mathcal{C}_q)=\left\{
     \begin{array}{ll}
       q-k &  b=0, p=2, k=1 \text{ or } q-3 \\
       q-k-1 & \text{otherwise}
     \end{array}
   \right.
\end{equation*}
\end{thm}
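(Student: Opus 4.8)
The plan is to first convert the distance computation into a counting problem about subsets of $\mathbb{F}_q$, and then to resolve that counting problem with additive characters. Since $\deg u(x)=k+1$, the Li--Wan bound quoted above gives $q-k-1\le d(u,\mathcal{C}_q)\le q-k$, so the only question is whether the lower value is attained. Now $d(u,\mathcal{C}_q)\le q-k-1$ exactly when some codeword, i.e.\ some polynomial $g(x)$ with $\deg g\le k-1$, agrees with $u$ on at least $k+1$ of the $q$ evaluation points. Because $u(x)-g(x)=x^{k+1}-bx^k+(v(x)-g(x))$ is monic of degree exactly $k+1$, it has at most $k+1$ roots, so such agreement forces it to split as $\prod_{i=1}^{k+1}(x-\alpha_i)$ with the $\alpha_i\in\mathbb{F}_q$ distinct; comparing the coefficient of $x^k$ by Vieta gives $\sum_i\alpha_i=b$, and conversely any $k+1$ distinct elements summing to $b$ produce an admissible $g$. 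Thus $d(u,\mathcal{C}_q)=q-k-1$ precisely when $\mathbb{F}_q$ contains a $(k+1)$-subset of sum $b$, and $u$ is a deep hole precisely when it does not. Writing $N(m,b)$ for the number of $m$-subsets of $\mathbb{F}_q$ summing to $b$, the task reduces to deciding when $N(k+1,b)=0$.

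Next I would compute $N(m,b)$ exactly. Using $\frac1q\sum_{\psi}\psi(t)=\mathbf{1}[t=0]$ over the additive characters $\psi$ of $\mathbb{F}_q$, one gets
\begin{equation*}
N(m,b)=\frac1q\sum_{\psi}\psi(-b)\,[z^m]\prod_{x\in\mathbb{F}_q}\bigl(1+z\,\psi(x)\bigr).
\end{equation*}
The trivial character contributes $[z^m](1+z)^q=\binom{q}{m}$. For a nontrivial $\psi$, the value $\psi(x)$ runs through each $p$-th root of unity exactly $q/p$ times as $x$ runs over $\mathbb{F}_q$ (the trace is balanced), so $\prod_x(1+z\psi(x))=\bigl(1-(-1)^pz^p\bigr)^{q/p}$, independent of $\psi$. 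Hence the coefficient $C_m:=[z^m]\bigl(1-(-1)^pz^p\bigr)^{q/p}$ is nonzero only when $p\mid m$, in which case $m=pl$ and $C_m=(-1)^{(p+1)l}\binom{q/p}{l}$. Summing $\psi(-b)$ over nontrivial $\psi$ gives $q\,\mathbf{1}[b=0]-1$, so that $N(m,b)=\frac1q\bigl(\binom{q}{m}-C_m\bigr)$ when $b\ne0$, and $N(m,0)=\frac1q\bigl(\binom{q}{m}+(q-1)C_m\bigr)$.

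With these formulas the case analysis is largely mechanical. If $p\nmid(k+1)$ then $C_{k+1}=0$ and $N(k+1,b)=\binom{q}{k+1}/q>0$. If $b\ne0$, then $N(k+1,b)=0$ would force $\binom{q}{k+1}=C_{k+1}$, impossible since $\binom{q}{k+1}>|C_{k+1}|=\binom{q/p}{l}$ in the relevant range; likewise if $p$ is odd and $b=0$ both terms of $N(k+1,0)$ are nonnegative and not both zero. So a deep hole can only occur when $p=2$ and $b=0$, with $k+1=2l$. Writing $Q=q/2$, the formula for $N$ then collapses to the requirement that $l$ be odd and
\begin{equation*}
\binom{2Q}{2l}=(2Q-1)\binom{Q}{l}.
\end{equation*}

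The crux, and the step I expect to be the main obstacle, is to show this last equation holds only for $l=1$ and $l=Q-1$ (which correspond to $k=1$ and $k=q-3$). I would study the ratio $f(l)=\binom{2Q}{2l}/\binom{Q}{l}$ and compute, after cancellation, $f(l)/f(l-1)=(2Q-2l+1)/(2l-1)$. Because $Q$ is a power of two (so $Q$ is even and this ratio never equals $1$), $f$ is strictly unimodal with $f(l)=f(Q-l)$ and minimum value $f(1)=f(Q-1)=2Q-1$ attained only at the two endpoints. Since the displayed equation says exactly $f(l)=2Q-1$, strict unimodality forces $l\in\{1,Q-1\}$, and both of these values of $l$ are odd for $Q\ge2$, completing the classification. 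Assembling the cases then yields $d(u,\mathcal{C}_q)=q-k$ exactly in the listed situations and $q-k-1$ otherwise.
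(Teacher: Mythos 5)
Your proof is correct, but a comparison with ``the paper's own proof'' needs a caveat: the paper never proves this theorem --- it quotes it as background from the Li--Zhu preprint \cite{LZ}. The closest internal material is the paper's main-theorem proof, and your opening move is identical to it: via the equivalence of Lemma \ref{errorD}, agreement of $u$ with a codeword on $k+1$ points is converted, through Vieta on the $x^k$ coefficient, into the existence of $k+1$ distinct evaluation points summing to $b$, and the count of such solutions is attacked with additive characters. Where you diverge is decisive and is what makes an exact answer possible: because $D=\mathbb{F}_q$ is the full field, the product $\prod_{x\in\mathbb{F}_q}(1+z\psi(x))$ collapses exactly to $(1-(-1)^pz^p)^{q/p}$ (the trace being balanced), so you obtain a closed formula for $N(m,b)$ --- this is precisely Li--Wan's exact subset-sum formula over finite fields \cite{LW2}, and essentially the route of the original Li--Zhu argument --- whereas the paper, forced by a general Dickson value set $D$, can only estimate the analogous sums through Weil's bound (Corollary \ref{weilBounds}, Lemma \ref{dicksonWeilEven}) and the Li--Wan sieve, getting asymptotic sufficient conditions that can never detect the equality cases $k=1$ and $k=q-3$. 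Your endgame is also sound: the ratio computation $f(l)/f(l-1)=(2Q-2l+1)/(2l-1)$ is correct, it never equals $1$ since $Q=q/2$ is even for $q\ge 4$, and strict unimodality together with the symmetry $f(l)=f(Q-l)$ and $f(1)=2Q-1$ pins the solutions of $f(l)=2Q-1$ to $l\in\{1,Q-1\}$, both odd, i.e.\ $k\in\{1,q-3\}$. Two small points deserve a line each but are not gaps: the ``relevant range'' strictness $\binom{q}{pl}>\binom{q/p}{l}$ for $1\le l\le q/p-1$ follows, e.g., from $\binom{q}{pl}\ge\binom{q/p}{l}^p$ (choose $l$ from each of $p$ blocks) together with $\binom{q/p}{l}\ge q/p\ge 2$; and the hypotheses $q\ge 4$ and $k+1\le q-1$ are implicitly needed, the latter automatic since $u(x)$ is an interpolated polynomial. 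In short: correct, same reduction as the paper's framework, but an exact counting argument in place of the paper's estimates --- exactness buys the complete classification for $D=\mathbb{F}_q$, while the paper's sieve-plus-Weil machinery buys applicability to the structured but inexplicit Dickson evaluation sets.
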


\noindent They also proved a similar distance formula for words of the form $u(x)=x^{k+2}-bx^{k+1}+cx^k+v(x)$, showing that such words are not deep holes either.

\subsection{For $D=\mathbb{F}_q^*$}

The choice of $D=\mathbb{F}_q^*$ is referred to as a primitive Reed-Solomon code. Wu and Hong studied this in \cite{WH} and showed

\begin{thm}[Wu-Hong]\label{wuHong}
Let $\mathcal{C}$ be a primitive Reed-Solomon code over $\mathbb{F}_q$ with $q\geq 4$ and $2\leq k\leq q-2$. Then polynomials of the form $u(x)=ax^{q-2}+v(x)$ with $a\not=0$, where $\deg v(x)\leq k-1$, represent deep holes for $\mathcal{C}$.
\end{thm}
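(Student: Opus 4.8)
The plan is to pin down $d(u,\mathcal{C})$ exactly, where $n=q-1$. Since the universal bound $d(u,\mathcal{C})\le n-k$ holds for free, it suffices to prove the reverse inequality $d(u,\mathcal{C})\ge n-k$. So I would argue by contradiction: suppose $d(u,\mathcal{C})\le n-k-1$. Then some codeword, namely the evaluation vector of a polynomial $g(x)$ with $\deg g\le k-1$, agrees with $u$ in at least $(q-1)-(n-k-1)=k+1$ of the coordinates. The goal is to show this is impossible.

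The observation I would exploit is that the evaluation set here is $\mathbb{F}_q^*$, on which $x^{q-1}=1$ and hence $x^{q-2}=x^{-1}$. Therefore, for every $x_i\in\mathbb{F}_q^*$, the received word satisfies $u(x_i)=a x_i^{q-2}+v(x_i)=a x_i^{-1}+v(x_i)$. At each of the $\ge k+1$ agreement points we then have $a x_i^{-1}+v(x_i)=g(x_i)$, and clearing the denominator by multiplying through by the nonzero element $x_i$ yields $x_i\bigl(g(x_i)-v(x_i)\bigr)-a=0$.

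The key step is to package this into the auxiliary polynomial $P(x)=x\bigl(g(x)-v(x)\bigr)-a$. Because $\deg(g-v)\le k-1$, the degree of $P$ is at most $k$, while the computation above shows that every one of the $\ge k+1$ distinct agreement points is a root of $P$. A nonzero polynomial of degree $\le k$ cannot have $k+1$ distinct roots, so $P$ would have to vanish identically; but $P(0)=-a\ne 0$ since $a\ne 0$, a contradiction. This forces $d(u,\mathcal{C})\ge n-k$, so $d(u,\mathcal{C})=n-k$ and $u$ is a deep hole.

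Before running this I would check one routine point: that $u(x)=a x^{q-2}+v(x)$ really is the interpolated polynomial and has degree exactly $q-2$. This holds because $\deg v\le k-1\le q-3$ keeps all lower terms below $q-2$ while the term $a x^{q-2}$ survives with $a\ne 0$, and the hypotheses $2\le k\le q-2$ guarantee $q-2\ge k>k-1$, so $u$ is not itself a codeword. There is no genuinely hard estimate here; the only thing to get right is the bookkeeping, namely the counting passage from ``distance $\le n-k-1$'' to ``$\ge k+1$ agreements'' and the degree count for $P$. In particular, the character-sum machinery used elsewhere in the paper is unnecessary for this statement, because the substitution $x^{q-2}=x^{-1}$ collapses the problem to an elementary root count, and the hypothesis $a\ne 0$ is exactly what rules out the degenerate case $P\equiv 0$.
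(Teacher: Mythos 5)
Your argument is correct and complete. One point of comparison is moot here: the paper only \emph{cites} this theorem from Wu--Hong \cite{WH} and gives no proof of its own, so there is nothing in the source to match against; judged on its own, your proof is the standard (and essentially Wu--Hong's original) route. The chain checks out: distance $\le n-k-1$ with $n=q-1$ forces at least $k+1$ agreement points; on $\mathbb{F}_q^*$ the identity $x^{q-2}=x^{-1}$ turns each agreement $a x_i^{-1}+v(x_i)=g(x_i)$ into a root of $P(x)=x\bigl(g(x)-v(x)\bigr)-a$, which has degree $\le k$ yet cannot vanish identically since $P(0)=-a\ne 0$; combined with the universal bound $d(u,\mathcal{C})\le n-k$ this gives equality. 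Two small remarks: your preliminary check that $u$ is not itself a codeword is redundant, since a codeword would agree with $u$ in all $q-1\ge k+1$ positions and fall to the same contradiction; and your argument never actually uses $q\ge 4$ or $k\ge 2$ (it works verbatim for $k=1$), so those hypotheses are artifacts of Wu--Hong's statement rather than necessities of the proof. Note also that your $P(x)$ is exactly the mechanism behind the later generalisation in the paper's Theorem~3: for $f_\delta(x)=1/(x-\delta)$ with $\delta\notin D$ one replaces $P$ by $(x-\delta)\bigl(g(x)-v(x)\bigr)-a$, and your case is $\delta=0\notin\mathbb{F}_q^*$.
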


\noindent (This was later generalised to any $D\ne \mathbb{F}_q$ by Wu and Hong \cite{WH2}, and separately by Zhang, Fu, and Liao \cite{FLZ}.) With this new family of deep holes, they revised the Cheng-Murray conjecture:

\begin{conj}[Wu-Hong]
All deep holes for primitive Reed-Solomon codes are those words $u$ represented by $u(x)=ax^k+v(x)$ or $ax^{q-2}+v(x)$ where $\deg(v(x))\leq k-1$.
\end{conj}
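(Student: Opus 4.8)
The plan is to establish the \emph{completeness} half of the statement, which is its real content: the family $ax^{q-2}+v(x)$ consists of deep holes by Theorem~\ref{wuHong}, and $ax^k+v(x)$ has degree exactly $k$ and so is a deep hole by the Li--Wan bound of Section~2, so what must be shown is that \emph{no other} word is a deep hole. Throughout $D=\mathbb{F}_q^*$ and $n=q-1$. The starting point is the standard reformulation: a word $u$ fails to be a deep hole, i.e. $d(u,\mathcal{C})\le n-k-1$, exactly when some polynomial $g(x)$ with $\deg g\le k-1$ agrees with $u$ on a subset of $D$ of size $k+1$; equivalently, when $u(x)-g(x)$ has at least $k+1$ distinct roots in $\mathbb{F}_q^*$.

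First I would treat the base case $\deg u(x)=k+1$, which drives the whole argument. Writing $u(x)=a_{k+1}x^{k+1}+a_kx^k+(\text{lower order})$ with $a_{k+1}\ne0$, any $g$ with $\deg g\le k-1$ leaves the top two coefficients untouched, so $u-g$ can be forced to split as $a_{k+1}\prod_{i=1}^{k+1}(x-z_i)$ over $D$ precisely when there exist \emph{distinct} $z_1,\dots,z_{k+1}\in\mathbb{F}_q^*$ with $\sum_{i=1}^{k+1}z_i=-a_k/a_{k+1}$ (the remaining coefficients then determine $g$ automatically). The deep-hole question thus collapses to a \textbf{subset sum problem}: does $\mathbb{F}_q^*$ contain a $(k+1)$-element subset of prescribed sum? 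This is attainable unless a degeneration forces all of $\mathbb{F}_q^*$ to be used, which is exactly what will single out the boundary degree $q-2$.

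The second step is to count these subsets by additive-character orthogonality. For ordered tuples $(z_1,\dots,z_{k+1})\in(\mathbb{F}_q^*)^{k+1}$ with sum $t\ne0$ one has the exact identity
\begin{equation*}
\#\{\text{tuples}\}=\frac1q\sum_{\psi}\psi(-t)\Bigl(\sum_{z\in\mathbb{F}_q^*}\psi(z)\Bigr)^{k+1}=\frac{(q-1)^{k+1}+(-1)^{k}}{q},
\end{equation*}
using $\sum_{z\in\mathbb{F}_q^*}\psi(z)=-1$ for nontrivial $\psi$ (the case $t=0$ gives an analogous value). Passing to \emph{distinct} $z_i$ — the combinatorially essential point, since repeated factors do not produce $k+1$ genuine agreements — requires an inclusion--exclusion over the patterns of coincidence, whose principal term is $\tfrac1q\binom{q-1}{k+1}$ and each of whose lower strata is again controlled by a character sum over $\mathbb{F}_q^*$ of the same shape. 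The resulting estimate yields a positive count, hence that $u$ is not a deep hole, throughout a suitable range of $k$. Intermediate degrees $k+1<\deg u(x)\le q-3$ are handled identically once the free coefficients of the cofactor of degree $\deg u(x)-k-1$ are absorbed into the symmetric-function system; this enlarges but does not alter the nature of the character sum. The degree $q-2$ case degenerates because the symmetric sum over \emph{all} of $\mathbb{F}_q^*$ is forced to a fixed value, and this is precisely the mechanism that leaves the family $ax^{q-2}+v(x)$ intact as genuine deep holes.

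The hardest part will be the error analysis in the extreme ranges. The character-sum bound controls the deviation of the distinct count from its main term $\tfrac1q\binom{q-1}{k+1}$ only when $k+1$ is comfortably away from both $0$ and $q-1$; as $k\to q-2$ the binomial main term shrinks while the accumulated error over the $q-2$ nontrivial characters does not decay quickly enough, so positivity is no longer guaranteed by this method alone. Sharpening the estimate in this regime — equivalently, resolving the subset sum problem for large $k$ and for the full tower of intermediate degrees simultaneously — is the genuine obstacle, and it is exactly where the conjecture remains open; the realistic goal is therefore to push the range of $\deg u(x)$ and $k$ in which completeness can be proved as far as the character-sum method will allow.
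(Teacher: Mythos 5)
The statement you are trying to prove is the Wu--Hong \emph{conjecture}: the paper offers no proof of it, only the easy direction (the two listed families are deep holes, via Theorem~\ref{wuHong} and the Li--Wan degree bound) plus partial results of Cheng--Li--Zhuang under the restrictions $p>2$ and $3\leq k<\frac{\sqrt{q}+1}{4}$ (or $3\le k<p/45$ for prime $q$). Your proposal does not prove it either, and you concede as much in your final paragraph. Worse, the completeness claim cannot be proved as stated: the paper itself records a counterexample family. By the Zhang--Fu--Liao theorem quoted in Section~2.2, for $q>4$ a power of $2$, $D=\mathbb{F}_q^*$ and $k=q-4$, every word $u(x)=ax^{q-3}+v(x)$ with $a\ne 0$, $\deg v(x)\le k-1$, is a deep hole; these have degree $q-3=k+1$, which is neither $k$ nor $q-2$. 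So your ``base case'' --- that a degree-$(k+1)$ word is never a deep hole because the distinct-coordinate subset-sum count $N>0$ --- is simply false at $k=q-4$ in characteristic two, and no amount of sharpening the character-sum error analysis can rescue it there: positivity genuinely fails, not merely the estimate. Any correct attack must build in a characteristic or range restriction from the start.

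There is also a substantive technical leap in the sentence claiming intermediate degrees $k+1<\deg u(x)\le q-3$ are ``handled identically.'' For $\deg u(x)=k+d$ with $d\ge 2$, the factorization required by Lemma~\ref{errorD}-type reasoning is $u(x)-g(x)=h(x)\prod_{i=1}^{k+1}(x-z_i)$ with an \emph{unknown} cofactor $h$ of degree $d-1$, so instead of one subset-sum constraint $\sum z_i=-a_k/a_{k+1}$ you get a coupled system of $d$ symmetric-function conditions mixing the $z_i$ with the coefficients of $h$. The resulting counts are not character sums ``of the same shape'' over $\mathbb{F}_q^*$; this is exactly where Cheng--Murray needed rational points on hypersurfaces and Li--Wan and Zhu--Wan needed Weil-type bounds for characters evaluated at higher-degree polynomial arguments, with error terms growing in $d$ and conclusions valid only in restricted ranges of $(k,d,q)$. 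Your degree-$(k+1)$ computation, including the exact identity $\frac{(q-1)^{k+1}+(-1)^k}{q}$ for ordered tuples and the inclusion--exclusion for distinctness, is sound (it is essentially the Li--Wan subset-sum result for $D=\mathbb{F}_q^*$, and structurally the same reduction the paper runs for its Dickson evaluation sets), but it establishes only that \emph{most} degree-$(k+1)$ words are not deep holes in a suitable range of $k$ --- a known partial result, not the conjectured classification.
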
 

\noindent Cheng, Li, and Zhuang in the previously referenced \cite{CLZ} were able to resolve this under some conditions.

\begin{thm}
Given a finite field $\mathbb{F}_q$ with characteristic $p>2$, if $3\leq k< \frac{\sqrt{q}+1}{4}$ or $3\leq k < \frac{p}{45}$ when $q=p$ is prime, then the Wu-Hong conjecture is true.
\end{thm}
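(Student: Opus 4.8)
The plan is to prove the classification by its contrapositive: I would show that every received word $u$ whose interpolating polynomial $u(x)$ is \emph{not} equivalent to $x^{k}$ or to $x^{q-2}$ fails to be a deep hole, while separately confirming that these two families genuinely are deep holes. Recall that $u$ fails to be a deep hole exactly when some codeword agrees with it in at least $k+1$ coordinates, i.e. when there are $k+1$ distinct points $z_{1},\dots,z_{k+1}\in\mathbb{F}_{q}^{*}$ at which the interpolant of the data $(z_{i},u(z_{i}))$ has degree at most $k-1$. The coefficient of $x^{k}$ in that interpolant is the divided difference $u[z_{1},\dots,z_{k+1}]$, and writing $u(x)=\sum_{j}a_{j}x^{j}$ it expands as $\sum_{j\ge k}a_{j}\,h_{j-k}(z_{1},\dots,z_{k+1})$, where $h_{m}$ is the complete homogeneous symmetric polynomial of degree $m$. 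Thus the whole question reduces to the solvability, in distinct nonzero coordinates, of the single symmetric equation $\sum_{j\ge k}a_{j}\,h_{j-k}(z)=0$. For a word of degree $k+1$ this is literally the subset-sum condition $z_{1}+\dots+z_{k+1}=-a_{k}/a_{k+1}$, the bridge to the Waring-type problems mentioned in the abstract.

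Next I would count the solutions by the additive-character (Li--Wan) method. Fixing a nontrivial additive character $\psi$ of $\mathbb{F}_{q}$, orthogonality gives the number $N$ of ordered distinct solutions as
\[
N=\frac{1}{q}\sum_{t\in\mathbb{F}_{q}}\;\sum_{z}\psi\!\left(t\sum_{j\ge k}a_{j}\,h_{j-k}(z)\right),
\]
where the inner sum runs over $(k+1)$-tuples of distinct $z_{i}\in\mathbb{F}_{q}^{*}$. The term $t=0$ yields the main term $\tfrac{1}{q}(q-1)(q-2)\cdots(q-k-1)$, and it suffices to show the $t\ne 0$ contributions are strictly smaller, so that $N>0$. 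The third and technically hardest step is bounding those $t\ne 0$ character sums: first one strips the distinctness constraint by inclusion--exclusion over the partition lattice (the Li--Wan sieve), after which each resulting sum factors into one-variable Weil sums $\sum_{z\in\mathbb{F}_{q}^{*}}\psi(tP(z))$ enjoying square-root cancellation. Balancing the resulting $\sqrt{q}$-size error against the main term is precisely what forces the hypothesis $k<(\sqrt{q}+1)/4$; the stronger range $k<p/45$ in the prime case would come from substituting a sharper, $\mathbb{F}_{p}$-specific estimate for the generic Weil bound.

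I expect the genuine obstacle to be uniformity in the degree: the symmetric polynomial above has degree $\deg u(x)-k$, which can be as large as $q-2-k$, so a single Weil estimate cannot control all words at once and the high-degree words need separate treatment. Here I would exploit the multiplicative symmetry of $D=\mathbb{F}_{q}^{*}$, under which $x^{q-2}$ acts as $x^{-1}$, to set up a duality (equivalently, the deep-hole-tree recursion of \cite{CLZ}) folding large-degree words back onto small-degree ones; this same symmetry explains why $ax^{q-2}+v(x)$ surfaces as the second genuine family. Finally I would close the classification by verifying the forward direction: words equivalent to $x^{k}$ are deep holes by the Li--Wan degree observation, and those equivalent to $x^{q-2}$ are deep holes by Theorem \ref{wuHong}, since clearing the denominator shows $a/x+(v-g)(x)$ can vanish at no more than $k$ points of $\mathbb{F}_{q}^{*}$.
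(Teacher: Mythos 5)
First, a point of orientation: the paper does not prove this theorem at all --- it is quoted as background from Cheng--Li--Zhuang \cite{CLZ} --- so the only in-paper technique your plan can be measured against is the main theorem's method (orthogonality of additive characters, the Li--Wan sieve, Weil's bound), which handles exactly the degree-$(k+1)$ subset-sum case. Your opening reductions are sound: the divided-difference criterion for failing to be a deep hole, the expansion into $\sum_{j\ge k} a_j h_{j-k}(z_1,\dots,z_{k+1})=0$, and the verification that the two conjectured families genuinely are deep holes (degree-$k$ words via the bound $n-\deg u(x)\le d(u,\mathcal{C})$ of \cite{LW2}, and $ax^{q-2}+v(x)$ by clearing denominators, as in Theorem \ref{wuHong}) are all correct. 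The first genuine gap is in what you yourself call the hardest step: for $\deg u(x)\ge k+2$ the $t\ne 0$ sums do \emph{not} factor into one-variable Weil sums, because $h_m$ for $m\ge 2$ has cross terms (already $h_2=\sum_i z_i^2+\sum_{i<j}z_iz_j$), so $\psi\bigl(t\sum_{j\ge k}a_jh_{j-k}(z)\bigr)$ is not a product of functions of the separate variables and the sieve's factorization mechanism --- the one this paper actually uses, and which works only because $h_1(z)=z_1+\dots+z_{k+1}$ splits --- is unavailable. What remains is a genuinely multivariate point count on a symmetric hypersurface of degree $\deg u(x)-k$, requiring absolute-irreducibility arguments and Cafure--Matera-type estimates (the route of \cite{CMP}), whose error terms grow with $\deg u(x)-k$ and therefore cannot by themselves cover degrees of order $q$.

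The second, more serious gap is that your proposed cure for this non-uniformity fails: the inversion symmetry sends $u(x)$ to a word equivalent to $x^{k-1}u(1/x) \bmod (x^q-x)$, under which each exponent $j\in[k,q-2]$ maps to $q+k-2-j$ \emph{individually}. Only words whose entire non-codeword support sits near $q-2$ fold down to low degree; a word such as $x^{q-2}+x^{k+1}$ --- which the Wu--Hong conjecture obliges you to exhibit as a non-deep-hole --- maps to the class of $x^k+x^{q-3}$, still of degree $q-3$. The middle range of supports is essentially invariant under this duality, so the bulk of the classification remains untouched, and the ``deep hole tree'' device of \cite{CLZ} is not this duality. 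Relatedly, your account of where the hypotheses come from is not credible: in the factored subset-sum regime the main-term/error balancing produces a logarithmic \emph{lower} bound on $k$ (exactly as in this paper, $k\ge\log_2 q$) together with an upper bound linear in $|D|$, not a ceiling of shape $k<(\sqrt{q}+1)/4$, which signals a multivariate estimate with error growing in the number of variables; and there is no ``sharper $\mathbb{F}_p$-specific'' replacement for Weil on complete one-variable sums --- Weil is optimal there --- so the range $k<p/45$ cannot arise the way you suggest and is left without any identified tool. As it stands, your plan establishes the easy forward inclusion and the $\deg u(x)=k+1$ case (plus its top-supported dual), but not the theorem.
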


\noindent Li and Zhu \cite{LZ} and Zhang, Fu, and Liao \cite{FLZ} found families of words of degree $k+1$ and $k+2$ that were not deep holes. The latter authors also studied codes over fields of characterstic two, finding yet another family of deep holes.

\begin{thm}[Zhang-Fu-Liao]
Let $q>4$ be a power of 2 and let $\mathcal{C}$ be a Reed-Solomon code over $\mathbb{F}_q$ with $D=\mathbb{F}_q^*$ or $D=\mathbb{F}_q^*/\{1\}$ and $k=q-4$. If $a\ne 0$, then polynomials of the form $u(x)=ax^{q-3}+v(x)$, where $\deg v(x)\leq k-1$, represent deep holes for $\mathcal{C}$.
\end{thm}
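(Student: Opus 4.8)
The plan is to treat the two evaluation sets in parallel and, in each case, reduce the deep-hole assertion to a single incompatibility that is forced by characteristic $2$. Write $u(x)=ax^{q-3}+v(x)$ with $\deg v\le k-1=q-5$, so $\deg u(x)=q-3$ and, since $q-3>k-1$, $u$ is not a codeword. For $D=\mathbb{F}_q^*$ we have $n=q-1$ and $n-k=3$, while the Li-Wan bound gives $d(u,\mathcal{C})\ge n-\deg u(x)=2$; it therefore suffices to rule out $d(u,\mathcal{C})=2$. For $D=\mathbb{F}_q^*\setminus\{1\}$ we have $n=q-2$ and $n-k=2$, with the same bound giving $d(u,\mathcal{C})\ge 1$, so it suffices to rule out $d(u,\mathcal{C})=1$.

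In each case I would argue by contradiction, supposing the distance is one below $n-k$. Then there is a message polynomial $f$ with $\deg f\le k-1$ whose codeword agrees with $u$ off a set $E$ of error positions, where $|E|=2$ in the first case and $|E|=1$ in the second. Put $g=u-f$. Since $\deg f\le q-5<q-3=\deg u$, the difference $g$ has degree exactly $q-3$ with leading coefficient $a$, and it vanishes precisely on the agreement set $S=D\setminus E$, which has $q-3$ elements. Hence $g$ is completely determined: $g(X)=a\prod_{x\in S}(X-x)$. The only remaining freedom, the choice of $E$, is constrained by the requirement that $f=u-g$ actually have degree at most $q-5$, i.e.\ that the coefficients of $X^{q-3}$ and $X^{q-4}$ in $g$ match those of $u$.

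The coefficient of $X^{q-3}$ matches automatically (both equal $a$). The coefficient of $X^{q-4}$ in $g$ is $-a\sum_{x\in S}x$, and it must equal the corresponding coefficient of $u$, which is $0$ because $v$ carries no $x^{q-4}$ term. Using $\sum_{x\in\mathbb{F}_q^*}x=0$ for $q>2$, the condition $\sum_{x\in S}x=0$ becomes $\sum_{e\in E}e=0$ for $D=\mathbb{F}_q^*$, and $1+\sum_{e\in E}e=0$ for $D=\mathbb{F}_q^*\setminus\{1\}$. In the first case this reads $e_1+e_2=0$, which in characteristic $2$ forces $e_1=e_2$, contradicting $|E|=2$; in the second it reads $e_1=1$, contradicting $e_1\in\mathbb{F}_q^*\setminus\{1\}$. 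Either way no admissible error set exists, so the distance cannot drop below $n-k$ and $u$ is a deep hole.

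The conceptual content lives in the reduction of the second paragraph, which is precisely the subset-sum formulation emphasised elsewhere in the paper: deciding whether $u$ fails to be a deep hole amounts to deciding whether the removed points can be chosen so that the top symmetric-function constraints on $g$ hold. The step I expect to require the most care is the bookkeeping that forces $g$ into the displayed factored form and shows that $\deg f\le k-1$ reduces to exactly the single subleading-coefficient equation above, with all lower-degree coefficients absorbed harmlessly into $f$. Once that is pinned down, the characteristic-$2$ collapse of the subleading symmetric function is immediate, and -- unlike the larger-$(n-k)$ or odd-characteristic analogues -- no character-sum estimate is needed to certify that the associated subset-sum problem is unsolvable.
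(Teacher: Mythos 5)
Your proof is correct, but note that the paper you are being compared against does not actually prove this statement: it appears there only as a quoted survey result, attributed to Zhang, Fu, and Liao \cite{FLZ}, so the relevant comparison is with the paper's general framework rather than with an internal proof. Measured against that framework, your argument is exactly the ``complementary'' form of the reduction the paper records as Lemma~\ref{errorD}: instead of exhibiting $k+1$ distinct roots in $D$ to push the distance down, you assume the distance is one below $n-k$, use the Li--Wan lower bound $d(u,\mathcal{C})\ge n-\deg u(x)$ to see this is the only case to exclude, force $g=u-f$ into the fully split form $a\prod_{x\in S}(X-x)$ by the degree count $\deg g=q-3=|S|$, and observe that the single nontrivial constraint from $\deg f\le k-1$ is the vanishing of the subleading coefficient, i.e.\ $\sum_{x\in S}x=0$. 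Your bookkeeping there is right: the $X^{q-3}$ coefficients match automatically, the $X^{q-4}$ coefficient of $u$ is $0$ since $\deg v\le q-5$, and all lower coefficients are absorbed into $f$, so the deep-hole question collapses to the one- or two-variable subset-sum instances $e_1+e_2=0$ (for $D=\mathbb{F}_q^*$) and $1+e_1=0$ (for $D=\mathbb{F}_q^*\setminus\{1\}$), both of which are unsolvable precisely because $-1=1$ in characteristic two. This also explains, as your writeup implicitly notes, why the family is genuinely a characteristic-two phenomenon: in odd characteristic $e_2=-e_1$ solves the first instance and the words are not deep holes. Where the paper's main theorem must invoke Weil-type character-sum estimates and the Li--Wan sieve because its subset-sum problem has $k+1$ distinct variables ranging over a structured set, your instance is so small that pure symmetric-function arithmetic suffices; the two arguments are the cheap and expensive ends of the same subset-sum philosophy, and your elementary route is entirely adequate for this statement.
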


\noindent This example shows that the deep hole problem in characteristic two, the most important setting for applications, may be very complicated.

\subsection{For other choices of $D$}

In a previous paper with Zhu \cite{KWZ}, we proved a scheme to find families of words with degree slightly larger than $k$ that were not deep holes, if a certain character sum depending on $D$ could be estimated.

\begin{thm}
Let $\mathcal{C}$ be the generalised Reed-Solomon code over $\mathbb{F}_q$ using the evaluation set $D$. Let $u$ be a received word. Suppose we can write
\begin{equation*}
\left(\frac{w(x_1)}{h(x_1)},\frac{w(x_2)}{h(x_2)},\ldots,\frac{w(x_{|D|})}{h(x_{|D|})} \right)=u
\end{equation*}
for some $h(x)\in \mathbb{F}_q[x]$, with no roots in $D\cup 0$, and $\deg h(x)+k\leq \deg w(x)\leq |D|-1$. Let $m$ be the smallest such degree of $w(x)$. Let $1\leq r\leq d:=m-k\leq |D|-k-1$. If the bound
\begin{equation*}
\left|\sum_{a\in D} \chi(1-ax)\right|\leq K q^{1/2}
\end{equation*}

\noindent is true over all nontrivial characters $\chi:(\mathbb{F}[x]/(\bar{h}(x)))^*\to \mathbb{C}^*$ with $\chi(\mathbb{F}_q^*)=1$ for some $K\geq d$ and 
$\bar{h}(x)=x^{m-k+1}h(1/x)$, there are positive constants $c_1$ and $c_2$ such that if
\begin{equation*}
d\leq K<c_1 \frac{|D|}{q^{1/2}}\ ,\ \left(\frac{d+r}{2}+1\right) \log_2 q < k <c_2|D|
\end{equation*}
\noindent then $d(u,\mathcal{C})\leq |D|-k-r$.
\end{thm}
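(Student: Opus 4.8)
The goal is to manufacture from $u$ a codeword that agrees with it on at least $k+r$ of the $|D|$ evaluation points, since any such agreement forces $d(u,\mathcal C)\le|D|-(k+r)$. Representing a codeword as $(g(x_1),\ldots,g(x_{|D|}))$ with $\deg g\le k-1$, and using that $h$ has no zero on $D$, the equality $u_i=g(x_i)$ is equivalent to the vanishing of $F(x):=w(x)-g(x)h(x)$ at $x_i$. Since $\deg(gh)\le k-1+\deg h<\deg w=m$, the polynomial $F$ has degree exactly $m=k+d$ with the same leading coefficient $w_m$ as $w$. So the plan is to exhibit a subset $S\subseteq D$ with $|S|=k+r$ together with a monic $R$ of degree $d-r$ for which $F=w_m\prod_{y\in S}(x-y)\,R(x)$ is simultaneously expressible as $w-gh$ with $\deg g\le k-1$; the root set $S$ then furnishes the required $k+r$ agreements.

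First I would pass to reciprocal polynomials to linearise the awkward degree constraint. Writing $\tilde w(x)=x^{m}w(1/x)$ and $\tilde g(x)=x^{k-1}g(1/x)$ and recalling $\bar h(x)=x^{d+1}h(1/x)$, the identity $F=w-gh$ reciprocates to $w_m\prod_{y\in S}(1-yx)\,\tilde R(x)=\tilde w(x)-\tilde g(x)\bar h(x)$ with $\tilde R$ the reciprocal of $R$, a unit of the form $1+\rho_1x+\cdots+\rho_{d-r}x^{d-r}$. The existence of an admissible $g$ is therefore equivalent to the single congruence
\[
w_m\prod_{y\in S}(1-yx)\,\tilde R(x)\equiv\tilde w(x)\pmod{\bar h(x)}.
\]
The $d-r$ free coefficients of $\tilde R$ absorb $d-r$ of the resulting $d$ coefficient conditions, leaving $r$ genuine constraints on the elementary symmetric functions of $S$. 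This is exactly a subset-sum (equivalently Waring-type) condition, and it is where the ring $\mathbb F_q[x]/(\bar h(x))$ and its characters trivial on $\mathbb F_q^*$ enter the picture.

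The number of admissible $S$ I would then extract by orthogonality over exactly the character family $H$ appearing in the hypothesis, namely the $\chi\colon(\mathbb F_q[x]/(\bar h(x)))^*\to\mathbb C^*$ with $\chi(\mathbb F_q^*)=1$, obtaining a weighted sum
\[
\frac{1}{|H|}\sum_{\chi}c_\chi\sum_{\substack{S\subseteq D\\|S|=k+r}}\prod_{y\in S}\chi(1-yx),
\]
in which $c_\chi$ encodes the target class coming from $\tilde w$ and the admissible $\tilde R$. The trivial character contributes the main term, of order $q^{-r}\binom{|D|}{k+r}$. For the inner sum over the distinct points of $S$ I would invoke the Li--Wan distinct-coordinate sieve, which rewrites $\sum_{|S|=k+r}\prod_{y\in S}\chi(1-yx)$ in terms of the power sums $\sum_{a\in D}\chi^{\,j}(1-ax)$; since $\chi^{\,j}$ is again a nontrivial character trivial on $\mathbb F_q^*$ (or trivial, giving $|D|$), the hypothesis $\bigl|\sum_{a\in D}\chi(1-ax)\bigr|\le Kq^{1/2}$ controls every one of these power sums.

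The crux, and the step I expect to be the main obstacle, is proving that the error summed over all nontrivial characters stays strictly below the main term $q^{-r}\binom{|D|}{k+r}$. Here the three hypotheses play distinct roles: $K\ge d$ is the regime in which the sieve bound on the elementary symmetric function is valid; $K<c_1|D|/q^{1/2}$ forces each nontrivial power sum ($\le Kq^{1/2}$) to be genuinely smaller than the trivial one ($=|D|$), so that the comparison against the equal-modulus model yields a true saving per factor; and $(\tfrac{d+r}{2}+1)\log_2q<k$, i.e.\ $q^{(d+r)/2+1}<2^{k}$, is calibrated so that the exponentially large binomial main term dominates the residual error, whose size is governed by $q^{(d+r)/2+1}$, while $k<c_2|D|$ keeps $k+r\le|D|$ so the subsets in question exist. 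Balancing these estimates to pin down explicit $c_1,c_2$ that make the final count strictly positive --- and hence guarantee an admissible $S$ and the sought codeword --- is the delicate heart of the proof.
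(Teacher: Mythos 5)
Your outline is correct and is essentially the method behind this theorem (which the paper itself only cites from \cite{KWZ} and whose machinery it reuses in specialised form, with $d=r=1$ and modulus $\bar h(x)=x^2$ collapsing to additive characters of $\mathbb{F}_q$, in its own main-theorem proof): reduce $d(u,\mathcal C)\leq |D|-k-r$ to $k+r$ agreements, i.e.\ to the factorisation $w-gh = w_m\prod_{y\in S}(x-y)R(x)$, reciprocate to a congruence modulo $\bar h(x)$, count admissible $S$ by orthogonality over the characters of $(\mathbb{F}_q[x]/(\bar h(x)))^*$ trivial on $\mathbb{F}_q^*$, handle distinctness with the Li--Wan sieve under the hypothesised bound $Kq^{1/2}$, and calibrate so the main term $\sim q^{-r}(|D|)_{k+r}$ beats an error of size governed by $q^{(d+r)/2+1}$, which is exactly where the condition $\left(\frac{d+r}{2}+1\right)\log_2 q < k$ arises. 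The one ingredient you leave implicit inside your $c_\chi$ --- the Weil/$L$-function bound $\bigl|\sum_{\tilde R}\chi(\tilde R)\bigr| = O\bigl(q^{(d-r)/2}\bigr)$ over monic cofactors of degree $d-r$ --- is needed to get that error exponent, but your stated final estimates already presuppose it correctly.
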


\noindent We were able to derive a suitable bound taking $D$ as any subgroup of $\mathbb{F}_q^*$:

\begin{thm}
Let $\mathcal{C}$ be the Reed-Solomon code over $\mathbb{F}_q$ using the evaluation set $D=(\mathbb{F}_q^*)^{\frac{q-1}{\ell}}$ of size $\ell$. Let $r\geq 1$ be an integer and $u$ a received word with interpolated polynomial $u(x)$ such that $r\leq d:= \deg(u(x))-k\leq q-2-k$. There are positive constants $c_1$ and $c_2$ such that if
\begin{equation*}
d<c_1 \frac{\ell}{q^{1/2}}\ ,\ \left(\frac{d+r}{2}+1\right) \log_2 q < k <c_2\ell
\end{equation*}
\noindent then $d(u,\mathcal{C})\leq \ell-k-r$.
\end{thm}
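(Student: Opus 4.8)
The plan is to deduce this as a corollary of the preceding theorem of \cite{KWZ}, the only real work being to verify its character sum hypothesis for the specific evaluation set $D=(\mathbb{F}_q^*)^{(q-1)/\ell}$ with an explicit constant $K$ of the right size.

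First I would set up the reduction so that the general theorem applies. Take $h(x)=1$ and $w(x)=u(x)$, so the representation $\left(w(x_1)/h(x_1),\ldots,w(x_\ell)/h(x_\ell)\right)=u$ holds trivially, $h$ has no roots in $D\cup\{0\}$, $|D|=\ell$, and $m=\deg w(x)=\deg u(x)=k+d$. Then $\bar h(x)=x^{m-k+1}h(1/x)=x^{d+1}$, so the characters to control are the nontrivial $\chi:(\mathbb{F}_q[x]/(x^{d+1}))^*\to\mathbb{C}^*$ with $\chi(\mathbb{F}_q^*)=1$. For suitably small $c_1,c_2$ the hypotheses $d<c_1\ell/q^{1/2}$ and $k<c_2\ell$ force $k+d\le \ell-1$, so the degree constraint $\deg h+k\le\deg w\le|D|-1$ of the general theorem is satisfied.

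The heart of the matter is the estimate $\left|\sum_{a\in D}\chi(1-ax)\right|\le d\,q^{1/2}$ (up to an absolute constant). Here I would exploit that $D$ has index $(q-1)/\ell$ in $\mathbb{F}_q^*$, so its indicator expands as $\mathbf{1}_D(a)=\frac{\ell}{q-1}\sum_\psi\psi(a)$, the sum running over the $(q-1)/\ell$ multiplicative characters $\psi$ of $\mathbb{F}_q^*$ trivial on $D$. Substituting gives
\[
\sum_{a\in D}\chi(1-ax)=\frac{\ell}{q-1}\sum_\psi\sum_{a\in\mathbb{F}_q^*}\psi(a)\,\chi(1-ax).
\]
Reading $\chi(1-ax)$ as a function of $a$ via the truncated logarithm in $\mathbb{F}_q[x]/(x^{d+1})$ realizes it as an additive character of a polynomial in $a$ of degree at most $d$, so each inner sum is a mixed Artin-Schreier-Kummer type sum to which Weil's bound applies, giving $\left|\sum_{a\in\mathbb{F}_q^*}\psi(a)\chi(1-ax)\right|\le d\,q^{1/2}$. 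Since there are exactly $(q-1)/\ell$ characters $\psi$, the prefactor $\tfrac{\ell}{q-1}$ cancels against this count, and the triangle inequality yields $\left|\sum_{a\in D}\chi(1-ax)\right|\le d\,q^{1/2}$; that is, the hypothesis of the general theorem holds with $K$ a fixed multiple of $d$.

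Absorbing that multiple into $c_1$, the two inequality chains of the general theorem, $d\le K<c_1|D|/q^{1/2}$ and $(\tfrac{d+r}{2}+1)\log_2 q<k<c_2|D|$, reduce exactly to the stated $d<c_1\ell/q^{1/2}$ and $(\tfrac{d+r}{2}+1)\log_2 q<k<c_2\ell$, and invoking the theorem gives $d(u,\mathcal{C})\le|D|-k-r=\ell-k-r$. The step I expect to be the main obstacle is the Weil estimate for the inner sums: one must confirm that $a\mapsto\psi(a)\chi(1-ax)$ defines a nontrivial character with conductor bounded in terms of $d$ (nontriviality requiring separate care when $\psi$ is principal, where one bounds the pure sum $\sum_a\chi(1-ax)$ using $\chi\neq1$), and that the Weil constant is genuinely $O(d)$. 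This is most delicate in small characteristic, where the truncated-logarithm identification of $\chi$ with a degree-$\le d$ polynomial breaks down and must be replaced by a direct ramification/conductor computation for the associated $L$-function.
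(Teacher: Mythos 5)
Your proposal matches the intended derivation exactly: the paper does not reprove this statement (it is quoted from the preprint \cite{KWZ}), but it presents it precisely as you do --- the preceding general theorem specialized to $h(x)=1$, $w(x)=u(x)$, $\bar{h}(x)=x^{d+1}$, with the character-sum hypothesis verified for the subgroup $D=(\mathbb{F}_q^*)^{(q-1)/\ell}$ by expanding $\mathbf{1}_D$ over the $(q-1)/\ell$ multiplicative characters trivial on $D$ and bounding each twisted inner sum by a Weil-type estimate to get $K=O(d)$, which is absorbed into $c_1$. Your flagged small-characteristic obstacle is in fact avoidable without any truncated logarithm: since $\chi(\mathbb{F}_q^*)=1$, one has $\chi(1-ax)=\chi\bigl(x-a^{-1}\bigr)$, so each inner sum becomes $\sum_{b\in\mathbb{F}_q^*}\chi'(x-b)$ with $\chi'(g)=\chi(g)\bar{\psi}(g(0))$ a nontrivial character of $(\mathbb{F}_q[x]/(x^{d+1}))^*$, and Weil's bound for character sums modulo a polynomial (as in \cite{W}, and as used in \cite{LW2,WZ}) gives $\bigl|\sum_{b}\chi'(x-b)\bigr|\leq d\,q^{1/2}$ uniformly in all characteristics.
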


\noindent This shows that some progress can be made on classifying deep holes when $D$ is relatively small.

\section{New Results}

Much of the previous work took Reed-Solomon codes with $D=\mathbb{F}_q$, $\mathbb{F}_q^*$, a subgroup of $\mathbb{F}_q^*$, or a very large subset of $\mathbb{F}_q$. 
Any subset $D$ of $\mathbb{F}_q$ can be represented as the image of a polynomial $f(x)$ over $\mathbb{F}_q$. Previous investigations take the polynomial 
$f(x)$ to be the linear polynomial $x$ or the monomial $x^{(q-1)/\ell}$. For a more general polynomial, the problem becomes much harder, as even computing the 
cardinality of the image $f(\mathbb{F}_q)$ is complicated, see \cite{CHW} for more detail.

Here we study the case where $D$ is slightly more general - it will be the image of a Dickson polynomial over $\mathbb{F}_q$, which is defined as follows:

\begin{def1}[Dickson Polynomial]
Let $n$ be a positive integer and $a\in \mathbb{F}_q$. The Dickson polynomial of degree $n$ is defined as
\begin{equation*}
D_n(x, a) = \sum_{i=0}^{\lfloor n/2 \rfloor} \frac{n}{n-i}\binom{n-i}{i} (-a)^i x^{n-2i}. 
\end{equation*}
\end{def1}

\noindent Note that for $a=0$, $D_n(x,0)=x^n$, so we see that Dickson polynomials are a sort of generalisation of monomials. Of particular use to us is the size of the image of these polynomials, also known as the value set. A simple fact for the monomial $D_n(x,0)=x^n$ is that the image of the map $D_n: \mathbb{F}_q^*\to \mathbb{F}_q^*$ has size $q-1$ if $\gcd(n, q-1)=1$ and size $(q-1)/\ell$ if $\gcd(n, q-1)=\ell$. In the first case, the map is $1$ to $1$; in the latter case, the map is $\ell$ to $1$. It turns out an analogous preimage-counting statement holds when $a\ne 0$. Chou, Mullen, and Wassermann in \cite{CMW} used a character sum argument to calculate

\begin{thm}\label{dicksonTheorem}
Let $n\geq 2$ and $a\in \mathbb{F}_q^*$. If $q$ is even, then $|D_n^{-1}(D_n(x_0,a))| =$
\begin{equation*}
 \left\{
     \begin{array}{cl}
       \gcd(n, q-1) & \text{ if condition A holds} \\
       \gcd(n, q+1) & \text{ if condition B holds} \\
       \dfrac{\gcd(n,q-1)+\gcd(n,q+1)}{2} & \text{$D_n(x_0,a)=0$}, 
     \end{array}
   \right.
\end{equation*}
where `condition A' holds if $x^2+x_0x+a$ is reducible over $\mathbb{F}_q$ and $D_n(x_0,a)\ne 0$; `condition B' holds if $x^2+x_0x+a$ is irreducible over $\mathbb{F}_q$ and $D_n(x_0,a)\ne 0$. \newline

\noindent If $q$ is odd, let $\eta$ be the quadratic character of $\mathbb{F}_q$. If $2^r||(q^2-1)$ then \\ $|D_n^{-1}(D_n(x_0,a))|=$
\begin{equation*}
\left\{
     \begin{array}{cl}
       \gcd(n, q-1) & \text{if } \eta(x_0^2-4a)=1  \\
       \ & \text{ and } D_n(x_0,a)\ne \pm 2a^{n/2} \\
       \gcd(n, q+1) & \text{if } \eta(x_0^2-4a)=-1 \\
       \ & \text{ and } D_n(x_0,a)\ne \pm 2a^{n/2} \\
       \dfrac{\gcd(n, q-1)}{2} & \text{if } \eta(x_0^2-4a)=1 \text{ and condition C holds} \\
       \ & \ \\
       \dfrac{\gcd(n, q+1)}{2} & \text{if } \eta(x_0^2-4a)=-1 \text{ and condition C holds} \\
       \ & \ \\
       \dfrac{\gcd(n,q-1)+\gcd(n,q+1)}{2} & \text{otherwise}, \\
     \end{array}
   \right.
\end{equation*}

\noindent where `condition C' holds if
\begin{equation*}
2^t||n \text{ with } 1\leq t\leq r-1, \eta(a)=-1, \text{ and } D_n(x_0,a)=\pm 2a^{n/2}
\end{equation*}
or
\begin{equation*}
2^t||n \text{ with } 1\leq t\leq r-2, \eta(a)=1, \text{ and } D_n(x_0,a)=- 2a^{n/2}
\end{equation*}
\end{thm}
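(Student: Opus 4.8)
The plan is to exploit the defining functional equation of Dickson polynomials, namely $D_n(y + a y^{-1}, a) = y^n + a^n y^{-n}$, which holds identically for every $y \in \overline{\mathbb{F}_q}^{\,*}$. This turns the problem of counting $\{x \in \mathbb{F}_q : D_n(x,a) = c\}$, where $c = D_n(x_0,a)$, into counting the $y$ for which $x = y + a y^{-1}$ and $y^n + a^n y^{-n} = c$. The substitution $y \mapsto y + a y^{-1}$ is generically two-to-one, since $y$ and $a/y$ produce the same $x$, the only fixed points being the $y$ with $y^2 = a$; I would track this factor of two throughout.

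First I would stratify the solutions according to the splitting type of the quadratic $T^2 - xT + a$ (equivalently the $x^2 + x_0 x + a$ of the statement, up to the sign conventions forced in characteristic two). For a given $x \in \mathbb{F}_q$ the two preimages $y, a/y$ lie in $\mathbb{F}_q^{*}$ exactly when this quadratic splits, detected in odd characteristic by $\eta(x_0^2 - 4a) = 1$ and in even characteristic by reducibility (condition A). When it is irreducible (condition B, or $\eta(x_0^2 - 4a) = -1$) the preimages form a conjugate pair in $\mathbb{F}_{q^2}^{*}$ with $y^q = a/y$, so that $N_{\mathbb{F}_{q^2}/\mathbb{F}_q}(y) = y^{q+1} = a$ and $y$ lives on a coset of the norm-one torus $\mu_{q+1}$.

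Next I would carry out the character-sum count, which is the heart of the argument. Setting $z = y^n$, the equation $y^n + a^n y^{-n} = c$ forces $z^2 - cz + a^n = 0$, a quadratic whose two roots $z_1, z_2$ satisfy $z_1 z_2 = a^n$, and one checks that the involution $y \mapsto a/y$ interchanges the fibres over $z_1$ and $z_2$. For each root one counts the $y$ with $y^n = z_i$ by orthogonality, writing $\#\{y : y^n = z_i\} = \sum_{\chi} \chi(z_i)$ over the multiplicative characters $\chi$ of order dividing $n$ of the relevant cyclic group. In the split stratum that group is $\mathbb{F}_q^{*}$ of order $q-1$, so a solvable $z_i$ contributes $\gcd(n, q-1)$ roots; in the inert stratum it is the torus of order $q+1$, contributing $\gcd(n, q+1)$. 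Combining the two roots $z_1, z_2$ with the two-to-one symmetry $y \leftrightarrow a/y$ then yields the clean generic counts $\gcd(n, q-1)$ and $\gcd(n, q+1)$ for all $c \neq \pm 2 a^{n/2}$.

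The main obstacle is the degenerate locus $c = \pm 2 a^{n/2}$, where the discriminant $c^2 - 4a^n$ vanishes and $z^2 - cz + a^n$ acquires the double root $z = \pm a^{n/2}$; here the root-doubling and the $y \leftrightarrow a/y$ symmetry collapse simultaneously, so the naive count overcounts and must be corrected. In even characteristic this locus is simply $c = 0$ (since $2 = 0$), which is exactly why $D_n(x_0,a) = 0$ is singled out and uniformly yields the average $\tfrac{1}{2}\bigl(\gcd(n,q-1) + \gcd(n,q+1)\bigr)$. In odd characteristic the correction hinges on whether the repeated value $\pm a^{n/2}$ is an $n$-th power in the split torus $\mathbb{F}_q^{*}$ or in the inert torus of order $q+1$, and whether the involution has a fixed point there; this solvability is precisely what condition C records, through the $2$-adic comparisons $2^t \,||\, n$ against $2^r \,||\, (q^2-1)$ together with $\eta(a) = \pm 1$, deciding between the halved counts $\gcd(n, q\pm 1)/2$ and the same average. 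I expect essentially all the difficulty to reside in this ramification bookkeeping, to be handled by tracking the square and $n$-th-power status of $a$ and of $-1$ through the cyclic structure of $\mathbb{F}_q^{*}$ and $\mu_{q+1}$.
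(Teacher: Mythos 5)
The paper itself contains no proof of this statement: it is imported verbatim from Chou--Mullen--Wasserman \cite{CMW}, so there is no in-paper argument to compare against. Measured against the known proof, your outline is essentially the standard one and structurally sound: the parametrization $x=y+ay^{-1}$ with $D_n(y+ay^{-1},a)=y^n+a^ny^{-n}$, the two-to-one involution $y\mapsto a/y$ with fixed points $y^2=a$, the split/inert dichotomy ($y\in\mathbb{F}_q^*$ versus $y^{q+1}=a$ on a coset of $\mu_{q+1}$), the reduction to $z^2-cz+a^n=0$ with the involution swapping $z_1$ and $z_2$, and the observation that in characteristic two the degenerate locus $c=\pm2a^{n/2}$ collapses to $c=0$ are all correct, and your count of $\frac{1}{2}\bigl(\gcd(n,q-1)+\gcd(n,q+1)\bigr)$ there can be completed exactly as you indicate (the unique fixed point $y=\sqrt{a}$ lies in both strata, so the two solution sets overlap in one element and the involution has one fixed point). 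One check you should make explicit in the generic case: a fiber over $c\neq\pm2a^{n/2}$ cannot mix strata, because a split solution $y\in\mathbb{F}_q^*$ of $y^n=z_1$ together with an inert one forces $z_1^{q+1}=z_1^2=a^n$, i.e.\ $c=\pm2a^{n/2}$; without this the clean counts $\gcd(n,q\pm1)$ do not follow. Also note that solvability of $y^n=z_i$ in the right group is automatic here because $c=D_n(x_0,a)$ puts $z_1=y_0^n$ in the stratum determined by $\eta(x_0^2-4a)$, which is why the answer is phrased in terms of $x_0$.

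The genuine gap is that the proposal stops precisely where the theorem's specific content lies: condition C is named but never derived. In the odd-characteristic degenerate case one must compute, not merely gesture at, (i) when the solution set of $y^n=\pm a^{n/2}$ lies in only one stratum versus both (the latter giving the ``otherwise'' average), and (ii) when the involution acts freely on it (giving the halved counts $\gcd(n,q\pm1)/2$) versus with fixed points $y^2=a$, whose existence in $\mathbb{F}_q^*$ or in the norm-$a$ coset is governed by $\eta(a)$. Carrying this out through the $2$-Sylow structure of the cyclic groups of orders $q-1$ and $q+1$ is what produces the precise statement --- the bound $1\leq t\leq r-1$ with $\eta(a)=-1$ allowing both signs $\pm2a^{n/2}$, versus $1\leq t\leq r-2$ with $\eta(a)=1$ allowing only $-2a^{n/2}$ --- and none of that asymmetry is recoverable from your sketch as written; your plan would be equally consistent with incorrect versions of these inequalities. (A smaller point folded into the same bookkeeping: $\pm2a^{n/2}$ only makes sense in $\mathbb{F}_q$ when $n$ is even or $\eta(a)=1$, and the case $\eta(x_0^2-4a)=0$, i.e.\ $x_0^2=4a$, must be absorbed into the degenerate locus via $c=\pm2a^{n/2}$; you handle neither explicitly.) So: right method, correct generic and characteristic-two analysis, but the hard case analysis that distinguishes this theorem from a routine exercise is deferred rather than done.
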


\noindent They also showed an explicit formula for the size of the value set of $D_n(x,a)$, denoted $|V_{D_n(x,a)}|$.

\begin{thm}\label{dicksonTheorem2}
Let $a\in \mathbb{F}_q^*$. If $2^r||(q^2-1)$ and $\eta$ is the quadratic character on $\mathbb{F}_q$ when $q$ is odd, then
\begin{equation*}
|V_{D_n(x,a)}| = \frac{q-1}{2\gcd(n,q-1)}+\frac{q+1}{2\gcd(n,q+1)}+\delta
\end{equation*}
where
\begin{equation*}
\delta = \left\{
     \begin{array}{cl}
       1 & \text{if $q$ is odd, $2^{r-1}||n$ and $\eta(a)=-1$} \\
       \dfrac{1}{2} & \text{if $q$ is odd, $2^{t}||n$ with $1\leq t\leq r-2$} \\
       0 & \text{otherwise}
     \end{array}
   \right.
\end{equation*}
\end{thm}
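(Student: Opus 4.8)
The plan is to read off the value-set size directly from the fibre data in Theorem \ref{dicksonTheorem} by a weighted count. For any map $D_n(\cdot,a)\colon\mathbb{F}_q\to\mathbb{F}_q$ the domain is partitioned into level sets, so if $M_s$ denotes the number of $x_0\in\mathbb{F}_q$ whose fibre $D_n^{-1}(D_n(x_0,a))$ has size $s$, then exactly $M_s/s$ distinct values carry fibre size $s$, whence
\begin{equation*}
|V_{D_n(x,a)}|=\sum_{s}\frac{M_s}{s}=\sum_{x_0\in\mathbb{F}_q}\frac{1}{|D_n^{-1}(D_n(x_0,a))|}.
\end{equation*}
By Theorem \ref{dicksonTheorem} the size $s$ takes only the finitely many values built from $g_-:=\gcd(n,q-1)$ and $g_+:=\gcd(n,q+1)$, so the entire problem reduces to counting, for each case listed there, how many $x_0$ fall into it.

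First I would dispose of the two generic cases, which produce the main terms. The split/inert dichotomy is governed by $\eta(x_0^2-4a)$, so I need $A:=\#\{x_0:\eta(x_0^2-4a)=1\}$ and $B:=\#\{x_0:\eta(x_0^2-4a)=-1\}$. These come from the elementary quadratic character sum $\sum_{x_0\in\mathbb{F}_q}\eta(x_0^2-4a)=-1$, valid since the discriminant $16a\ne0$, together with $A+B=q-(1+\eta(a))$; this yields $A=\tfrac{1}{2}(q-2-\eta(a))$ and $B=\tfrac{1}{2}(q-\eta(a))$. Dividing the bulk of $A$ by $g_-$ and the bulk of $B$ by $g_+$ already reproduces the shape $\tfrac{q-1}{2g_-}+\tfrac{q+1}{2g_+}$, up to corrections of order $1/g_\pm$ that must be furnished by the boundary points.

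The substance of the argument, and the main obstacle, is the bookkeeping at the critical values $\pm2a^{n/2}$, where the reduced fibres ($g_-/2$, $g_+/2$) and the merged fibres ($(g_-+g_+)/2$) occur and where $\delta$ is born. Writing $x_0=y+a/y$ gives $D_n(x_0,a)=y^n+a^ny^{-n}$, so $D_n(x_0,a)=\pm2a^{n/2}$ forces $(y^n\mp a^{n/2})^2=0$, i.e.\ $y^n=\pm a^{n/2}$; hence the critical preimages are exactly the $x_0=\sqrt{a}\,(\zeta+\zeta^{-1})$ with $\zeta^n=\pm1$. I would enumerate those $x_0$ lying in $\mathbb{F}_q$, split by $\eta(x_0^2-4a)\in\{1,-1,0\}$, by tracking whether $\sqrt{a}\in\mathbb{F}_q$ (that is, $\eta(a)=\pm1$) and the $2$-adic data $2^t||n$, $2^r||(q^2-1)$ appearing in condition C. Matching each such point to its fibre in Theorem \ref{dicksonTheorem} shows that a critical point contributes a reduced fibre precisely when condition C holds and a merged fibre otherwise; in particular the two genuine double-root points $x_0=\pm2\sqrt{a}$, which exist only when $\eta(a)=1$, have $\eta(x_0^2-4a)=0$ and so always fall into the merged case. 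Assembling $M_{g_-},M_{g_+},M_{g_-/2},M_{g_+/2},M_{(g_-+g_+)/2}$ from this census and substituting into the weighted sum, the generic terms give $\tfrac{q-1}{2g_-}+\tfrac{q+1}{2g_+}$ while the critical-point surplus collapses to exactly the three-way $\delta$. The even-$q$ case runs in parallel, with the single critical value $0=2a^{n/2}$ replacing $\pm2a^{n/2}$ and reducibility of $x^2+x_0x+a$ (an Artin--Schreier trace condition, counted by additive characters) replacing $\eta(x_0^2-4a)$; there condition C never triggers, so $\delta=0$. The delicate part throughout is pinning down these critical-point counts exactly, since every term of $\delta$ is a small correction that can only arise from them.
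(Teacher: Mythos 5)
The paper contains no proof of this statement to compare against: Theorem \ref{dicksonTheorem2} is quoted verbatim from Chou--Mullen--Wassermann \cite{CMW}, and your proposal essentially reconstructs the derivation one would expect from that source, namely reading the value-set size off the fibre data of Theorem \ref{dicksonTheorem} via the weighted count $|V_{D_n(x,a)}|=\sum_{x_0\in\mathbb{F}_q}1/|D_n^{-1}(D_n(x_0,a))|$. The parts you actually execute are correct: the weighted identity is sound; the character-sum evaluation $\sum_{x_0}\eta(x_0^2-4a)=-1$ together with the count $1+\eta(a)$ of zeros of $x_0^2-4a$ gives your values of $A$ and $B$; the functional equation $D_n(y+a/y,a)=y^n+a^ny^{-n}$ correctly identifies the critical points as $x_0=\sqrt{a}\,(\zeta+\zeta^{-1})$ with $\zeta^n=\pm1$; and the placement of $x_0=\pm2\sqrt{a}$ (existing only for $\eta(a)=1$, with $\eta(x_0^2-4a)=0$) in the merged case matches the table in Theorem \ref{dicksonTheorem}. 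One can spot-check the skeleton, e.g.\ $n=2$ and $q$ odd recovers $|V|=(q+1)/2$ with $\delta=1/2$.

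The genuine gap is that the step carrying all of the theorem's content --- the census at the critical values --- is asserted rather than performed. Every case of $\delta$ lives there: you must determine, for each sign and each $2n$-th root of unity $\zeta$ in $\mathbb{F}_{q^2}$ with $\zeta^n=\pm1$, when $x_0=\sqrt{a}\,(\zeta+\zeta^{-1})$ actually lies in $\mathbb{F}_q$, which side $\eta(x_0^2-4a)=\pm1$ it falls on, and whether condition C holds; this hinges on the interaction of the $2$-Sylow subgroups of the cyclic groups of orders $q-1$ and $q+1$ (whence the hypothesis $2^r\,||\,(q^2-1)$) with $2^t\,||\,n$ and $\eta(a)$, i.e.\ on whether $\sqrt{a}$ and $a^{n/2}$ are rational and where $\zeta$ sits relative to the norm-one subgroup. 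Your sketch does not explain, for instance, why the surplus is a full $1$ exactly when $2^{r-1}\,||\,n$ and $\eta(a)=-1$, why it is $1/2$ for $1\leq t\leq r-2$ \emph{independently} of $\eta(a)$, or why it vanishes when $2^{r-1}\,||\,n$ but $\eta(a)=+1$; these asymmetries are not visible from the framework you set up and emerge only from the explicit enumeration you defer. So the proposal is a well-aimed and correctly framed plan --- identity, main terms, and critical-point characterisation are all right --- but the claim that the critical-point surplus ``collapses to exactly the three-way $\delta$'' is precisely the statement to be proved, and as written the argument is incomplete at its decisive point.
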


\noindent These results together with a character sum argument lead us to our main theorem.

\subsection{Main Theorem}

\begin{thm}
Let $\mathcal{C}$ be the Reed-Solomon code over $\mathbb{F}_q$ with message length $k$, using the evaluation set $D=\{D_n(x,a)\ |\ x\in \mathbb{F}_q\}$, for $a\in \mathbb{F}_q^*$. Let $u$ be a received word and $u(x)$ its interpolated polynomial with $\deg u(x)=k+1$. There exist computable positive constants $c_1$ and $c_2$ such that if the conditions
\begin{equation*}
\frac{n+1}{2}\sqrt{q} < c_1 |D| \text{\ \ and } \log_2 q \leq k < c_2 |D|
\end{equation*}
are satisfied, then $u$ is not a deep hole.
\end{thm}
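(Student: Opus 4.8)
The plan is to reduce the deep hole question to a distinct subset sum problem and then to invoke the reduction theorem of our earlier work \cite{KWZ} in its simplest guise, so that the only genuinely new ingredient is a character sum estimate tailored to the Dickson value set. First I would recall the interpolation reduction. Since $\deg u(x)=k+1$, every codeword is represented by a polynomial $g(x)$ of degree at most $k-1$, and $u$ fails to be a deep hole exactly when some such $g$ makes $u(x)-g(x)$ split into $k+1$ distinct linear factors with all roots in $D$. Writing $u(x)=cx^{k+1}+bx^{k}+\cdots$ with $c\ne 0$, the coefficients of $x^{k+1}$ and $x^{k}$ are untouched by subtracting $g$, so matching the coefficient of $x^{k}$ in the identity $u(x)-g(x)=c\prod_{i=1}^{k+1}(x-z_i)$ forces $\sum_{i=1}^{k+1}z_i=-b/c$, while all lower coefficients are free and may be absorbed into $g$. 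Hence $u$ is not a deep hole precisely when $D$ contains $k+1$ distinct elements whose sum equals the prescribed value $s:=-b/c$, which is the announced subset sum (Dickson--Waring) formulation.

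Next I would apply the reduction theorem of \cite{KWZ} in the case of ordinary polynomial interpolation, that is $h(x)=1$, $w(x)=u(x)$, $d=\deg u(x)-k=1$, and $r=1$. Here $\bar h(x)=x^{2}$, and the nontrivial characters of $(\mathbb{F}_q[x]/(x^2))^{*}$ that are trivial on $\mathbb{F}_q^{*}$ are precisely the additive characters of $\mathbb{F}_q$: if $\chi(1+tx)=\psi(\beta t)$ for the canonical additive character $\psi$ and some $\beta\ne 0$, then $\chi(1-zx)=\psi(-\beta z)$, so the hypothesis character sum collapses to $\sum_{a\in D}\chi(1-ax)=\sum_{z\in D}\psi(-\beta z)$. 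Thus the abstract requirement $|\sum_{a\in D}\chi(1-ax)|\le Kq^{1/2}$ becomes the concrete estimate $|\sum_{z\in D}\psi(\beta z)|\le Kq^{1/2}$ for all $\beta\ne 0$, and taking $K=\frac{n+1}{2}$ turns the conditions $d\le K<c_1|D|/q^{1/2}$ and $(\tfrac{d+r}{2}+1)\log_2 q<k<c_2|D|$ into the stated $\frac{n+1}{2}\sqrt q<c_1|D|$ and $\log_2 q\le k<c_2|D|$, with the absolute constant in the logarithmic bound absorbed into $c_2$.

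The crux is therefore the estimate $\big|\sum_{z\in D}\psi(\beta z)\big|\le \frac{n+1}{2}\sqrt q$ for every $\beta\ne 0$. To prove it I would exploit the defining functional identity $D_n(y+ay^{-1},a)=y^{n}+a^{n}y^{-n}$, which converts the unweighted sum over the value set into Weil-type sums over the two families of preimages: the split points, where $y$ ranges over $\mathbb{F}_q^{*}$, and the inert points, where $y$ ranges over the norm-$a$ subgroup $\{y\in\mathbb{F}_{q^2}^{*}:y^{q+1}=a\}$. Equivalently, one may start from $\sum_{x\in\mathbb{F}_q}\psi(\beta D_n(x,a))=\sum_{z\in D}|D_n^{-1}(z)|\,\psi(\beta z)$ and use the preimage counts of Theorem \ref{dicksonTheorem} together with the value set size of Theorem \ref{dicksonTheorem2} to strip off the nonconstant multiplicities and recover the unweighted sum, the quadratic character $\eta(x^2-4a)$ serving to separate the split and inert contributions. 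Applying Weil's bound to each piece and then dividing by the generic two-to-one overcounting of the substitution $x\mapsto y+ay^{-1}$ is what is expected to produce the factor $\frac{n+1}{2}$.

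The main obstacle I anticipate is precisely this last estimate. The Dickson map is not uniformly $m$-to-$1$: the weights $|D_n^{-1}(z)|$ jump between $\gcd(n,q-1)$ and $\gcd(n,q+1)$ and take exceptional values on a bounded set of points, as catalogued in Theorem \ref{dicksonTheorem}. Disentangling these multiplicities cleanly enough to recover the unweighted sum over $D$ with the sharp constant $\frac{n+1}{2}$, rather than a bound inflated by the gcd factors, will require carefully controlling the exceptional values and the mixed sums $\sum_{x}\eta(x^2-4a)\psi(\beta D_n(x,a))$ introduced by the split/inert separation. Once this estimate is in hand, the remainder is a routine specialization of \cite{KWZ}, yielding $d(u,\mathcal{C})\le |D|-k-1<|D|-k$ and hence that $u$ is not a deep hole.
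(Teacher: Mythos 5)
Your reduction to the distinct subset sum problem is exactly the paper's first step (Lemma \ref{errorD} plus comparing the coefficient of $x^k$), and routing the rest through the reduction theorem of \cite{KWZ} with $h(x)=1$, $\bar h(x)=x^2$, $d=r=1$ is a legitimate alternative to what the paper actually does (orthogonality of additive characters plus Li-Wan's sieve, carried out directly). Your translation of the characters of $(\mathbb{F}_q[x]/(x^2))^*$ trivial on $\mathbb{F}_q^*$ into additive characters of $\mathbb{F}_q$ is also correct. The problem is that you have miscalibrated the target of the crucial estimate, and this is where your proposal has a genuine gap. You read the hypothesis $\frac{n+1}{2}\sqrt{q}<c_1|D|$ as demanding the character sum bound with $K=\frac{n+1}{2}$; but $c_1$ is an unspecified computable constant, so $K=n+1$ suffices after the harmless rescaling $c_1\mapsto c_1/2$. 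In the paper, the factor $\frac{1}{2}$ does not come from a sharper character sum at all: it comes out of the sieve, because cycles of even length $i$ contribute $\sum_{x\in D}\psi^i(x)=|D|$ when $\psi^i$ is trivial (taking $d=2$ in the combinatorial lemma), which halves the $|D|-s$ term and produces $\bigl(\frac{(n+1)\sqrt{q}}{2}+k+\frac{|D|}{2}\bigr)_{k+1}$.

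Consequently, the estimate you flag as your ``main obstacle,'' namely $\bigl|\sum_{z\in D}\psi(\beta z)\bigr|\leq \frac{n+1}{2}\sqrt{q}$, is both unnecessary and very likely unattainable by your method. Carrying out exactly the multiplicity-stripping you describe (Theorem \ref{dicksonTheorem} weights, split/inert separation) yields coefficients $\frac{1}{2}\bigl(\frac{1}{\gcd(n,q-1)}\pm\frac{1}{\gcd(n,q+1)}\bigr)$ multiplying sums that Corollary \ref{weilBounds} bounds by $(n-1)\sqrt{q}$ and $(n+1)\sqrt{q}$; this is Lemma \ref{dicksonWeilEven}, and it gives $(n+1)\sqrt{q}$, not $\frac{n+1}{2}\sqrt{q}$ --- for instance when $\gcd(n,q-1)=1$ and $\gcd(n,q+1)=n$ the method produces roughly $\frac{n^2-1}{n}\sqrt{q}$. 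The heuristic of ``dividing by the two-to-one overcounting'' is spurious: the overcounting is by the gcd factors, which are already the weights $N_x$. Two further points: your sketch uses $\eta(x^2-4a)$, which only exists for odd $q$; in even characteristic the split/inert dichotomy is governed by $\mathrm{Tr}(a/x_0^2)$, and one needs the bound for $\sum_{x\in\mathbb{F}_q^*}\psi_{\mathrm{Tr}}\bigl(bD_n(x,a)+a/x^2\bigr)$ together with the Artin--Schreier non-degeneracy check of Corollary \ref{weilBounds}(3) --- the theorem covers even $q$ (the paper's example is $q=2^{16}$), so omitting this case is a real gap. And quoting the \cite{KWZ} theorem as a black box with $d=r=1$ gives the lower bound $2\log_2 q<k$, which cannot be ``absorbed into $c_2$'' since it is a lower bound; the paper's direct computation reaches the stated threshold $k\geq\log_2 q$ via $q^{-1/(k+1)}>\frac{1}{2}$. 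With the target corrected to $K=n+1$ (proved as in Lemma \ref{dicksonWeilEven}), both characteristics treated, and the logarithmic threshold recomputed, your architecture would go through.
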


\noindent Theorem \ref{dicksonTheorem2} shows that $D$ takes on a variety of sizes, large and small, depending on the parameters. This implies that progress is possible toward the deep hole problem for some small evaluation sets without any obvious algebraic structure to rely on.

\subsection{Example}

Take $q=2^{16}$ to consider a Reed-Solomon code over $\mathbb{F}_{2^{16}}$. When $n=3$, the Dickson polynomial family is $D_3(x,a)=x^3-3ax=x^3+ax$. Theorem \ref{dicksonTheorem2} gives $|D|=43691$. The conditions to satisfy are
\begin{equation*}
640 < 43691c_1 \text{\ \ and } 16 \leq k < 43691c_2
\end{equation*}
Fix $c_1=.015$. The proof shows that the largest choice of $c_2$ is
\begin{equation*}
c_2=65536^{-\frac{1}{k+1}}-\frac{1}{2}-.015
\end{equation*}
to examine
\begin{equation*}
16 \leq k < 43691\left( 65536^{-\frac{1}{k+1}}-\frac{1}{2}-.015 \right). 
\end{equation*}
The supported message sizes are $16\leq k\leq 21182$.

\subsection{Preliminaries}

\subsubsection{Weil's Character Sum Bound}

Our results rely on the following generalisation of Weil's classical character sum bound, see  \cite{FW}:

\begin{thm}
Let $f_i(t)$ ($1\leq i\leq n$) be polynomials in $\mathbb{F}_q[t]$, let $f_{n+1}(t)$ be a rational function in $\mathbb{F}_q(t)$, let $D_1$ be the degree of the highest square free divisor of $\prod_{i=1}^n f_i(t)$, let $D_2=0$ if $\deg(f_{n+1})\leq 0$ and $D_2=\deg(f_{n+1})$ if $\deg(f_{n+1})>0$, let $D_3$ be the degree of the denominator of $f_{n+1}$, and let $D_4$ be the degree of the highest square free divisor of the denominator of $f_{n+1}(t)$ which is relatively prime to $\prod_{i=1}^n f_i(t)$. Let $\chi_i:\mathbb{F}_q^* \to \mathbb{C}^*$ $(1\leq i\leq n)$ be multiplicative characters of $\mathbb{F}_q$, and let $\psi=\psi_p \circ \text{Tr}_{\mathbb{F}_q / \mathbb{F}_p}$ for a non-trivial additive character $\psi_p: \mathbb{F}_p \to \mathbb{C}^*$ of $\mathbb{F}_p$. Extend $\chi_i$ to $\mathbb{F}_q$ by setting $\chi_i(0)=0$. Suppose that $f_{n+1}(t)$ is not of the form $r(t)^p-r(t)+c$ in $\mathbb{F}_q(t)$. Then for any $m\geq 1$, we have
\begin{align*}
&\left| \sum_{a\in \mathbb{F}_{q^m}, f_{n+1}(a)\ne \infty} \chi_1(\text{N}_{\mathbb{F}_{q^m} / \mathbb{F}_q}(f_1(a))) \cdots \chi_n(\text{N}_{\mathbb{F}_{q^m} / \mathbb{F}_q}(f_n(a))) \psi(\text{Tr}_{\mathbb{F}_{q^m} / \mathbb{F}_q}(f_{n+1}(a))) \right| \\
&\phantom{\sum}\leq (D_1 + D_2 + D_3 + D_4 - 1)q^{m/2}
\end{align*}
where the sum is taken over those $a\in \mathbb{F}_{q^m}$ such that $f_{n+1}(a)$ is well-defined.
\end{thm}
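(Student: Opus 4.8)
The plan is to realize the character sum as a sum of local Frobenius traces of a rank-one $\ell$-adic sheaf on an open curve, and then bound it using Deligne's purity theorem together with the Grothendieck--Ogg--Shafarevich Euler characteristic formula. First I would work over the base field $\mathbb{F}_q$ rather than over each $\mathbb{F}_{q^m}$ separately. Let $U \subset \mathbb{A}^1_{\mathbb{F}_q}$ be the open subvariety obtained by deleting the zeros of each $f_i$ and the poles of $f_{n+1}$. On $U$ I would build the lisse rank-one sheaf $\mathcal{L} = \left(\bigotimes_{i=1}^n f_i^{*}\mathcal{L}_{\chi_i}\right) \otimes f_{n+1}^{*}\mathcal{L}_\psi$, where $\mathcal{L}_{\chi_i}$ is the Kummer sheaf attached to the multiplicative character $\chi_i$ and $\mathcal{L}_\psi$ is the Artin--Schreier sheaf attached to the additive character $\psi$. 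The key compatibility is that the Lefschetz trace formula counts $\mathbb{F}_{q^m}$-points by composing the fiberwise character with the norm (for the Kummer factors) and the trace (for the Artin--Schreier factor); this is exactly why $\text{N}_{\mathbb{F}_{q^m}/\mathbb{F}_q}$ and $\text{Tr}_{\mathbb{F}_{q^m}/\mathbb{F}_q}$ appear in the statement, and it is why a single sheaf over $\mathbb{F}_q$ controls all $m$ simultaneously, yielding a bound uniform in $m$.

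With the sheaf in hand, the next step is Grothendieck's Lefschetz trace formula, which expresses the sum as the alternating sum of traces of $\text{Frob}_q^m$ on the compactly supported cohomology groups $H^i_c(U_{\overline{\mathbb{F}}_q}, \mathcal{L})$ for $i = 0,1,2$. I would argue that $\mathcal{L}$ is geometrically nontrivial: this is precisely where the hypothesis that $f_{n+1}$ is not of the form $r(t)^p - r(t) + c$ enters, since it guarantees that the Artin--Schreier part does not become geometrically constant, so together with the multiplicative factors one gets $H^0_c = H^2_c = 0$ and only $H^1_c$ survives. Since $\mathcal{L}$ is lisse and pointwise pure of weight $0$ on a smooth affine curve, Deligne's Weil~II theorem shows every eigenvalue of $\text{Frob}_q$ on $H^1_c$ has absolute value at most $q^{1/2}$. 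Hence the sum is bounded by $\dim_{\overline{\mathbb{Q}}_\ell} H^1_c(U_{\overline{\mathbb{F}}_q}, \mathcal{L}) \cdot q^{m/2}$.

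The remaining and decisive step is to show this dimension is at most $D_1 + D_2 + D_3 + D_4 - 1$. Because the outer cohomology groups vanish, $\dim H^1_c = -\chi_c(U_{\overline{\mathbb{F}}_q}, \mathcal{L})$, and the Grothendieck--Ogg--Shafarevich formula computes this Euler characteristic as $-\operatorname{rank}(\mathcal{L})\cdot\chi_c(U) + \sum_x \operatorname{Sw}_x(\mathcal{L})$, the sum ranging over the points of $\mathbb{P}^1$ removed from $U$. Since $\operatorname{rank}(\mathcal{L}) = 1$, $g = 0$, and $\chi_c(U) = 2 - \#(\mathbb{P}^1 \setminus U)$, this reduces to counting deleted points and summing Swan conductors. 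The tame ramification of the Kummer factors contributes the squarefree-divisor degrees, accounting for $D_1$ (zeros of the $f_i$) and $D_4$ (the poles of $f_{n+1}$ coprime to $\prod_i f_i$), while the wild ramification of the Artin--Schreier factor at the poles of $f_{n+1}$ and at infinity contributes Swan conductors governed by $D_2$ and $D_3$.

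The main obstacle is exactly this local conductor computation. I expect the bookkeeping at infinity to be the hardest part, since there the pole order of $f_{n+1}$ (contributing $D_2$) must be combined with the tame Kummer twists, and one must carefully separate which denominator factors are coprime to $\prod_i f_i$ (the distinction producing $D_3$ versus $D_4$) to avoid double counting. Collapsing all of these local contributions into the single closed expression, and in particular verifying that the global term $-\operatorname{rank}(\mathcal{L})\cdot\chi_c(U)$ together with the summed Swan conductors produces precisely the grouping $D_1 + D_2 + D_3 + D_4$ and the additive constant $-1$, is where the real effort lies; the purity input and the trace formula are by comparison formal once the sheaf and its nontriviality are established.
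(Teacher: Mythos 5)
This theorem is not proved in the paper at all: it is imported verbatim from Fu--Wan \cite{FW}, so there is no internal proof to compare your attempt against. That said, your proposal is correct in both structure and arithmetic, and it follows the standard cohomological route by which such bounds are established in the cited literature: realize the sum as the trace of $\mathrm{Frob}_q^m$ on the compactly supported cohomology of the rank-one Kummer--Artin--Schreier sheaf $\mathcal{L}$ on $U\subset\mathbb{A}^1$, kill $H^0_c$ and $H^2_c$ by geometric nontriviality, bound the $H^1_c$ eigenvalues by Deligne's Weil II, and bound $\dim H^1_c$ by Grothendieck--Ogg--Shafarevich. Your conductor bookkeeping checks out exactly: with $S=\mathbb{P}^1\setminus U$ one has $\#S\leq D_1+D_4+1$ (the $+1$ for $\infty$, and the coprimality in the definition of $D_4$ is precisely what prevents double counting points that are both zeros of $\prod_i f_i$ and poles of $f_{n+1}$), the Kummer factors are everywhere tame so contribute no Swan conductor, and the Artin--Schreier factor has total Swan conductor at most $D_3$ at the finite poles and at most $D_2$ at infinity, giving $\dim H^1_c \leq (\#S-2)+D_2+D_3 \leq D_1+D_2+D_3+D_4-1$, which is the stated constant. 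Two points that your sketch gestures at but should be made explicit in a full write-up: first, geometric nontriviality of $\mathcal{L}$ requires the observation that, after Artin--Schreier reduction over $\overline{\mathbb{F}}_q$, the hypothesis on $f_{n+1}$ guarantees a surviving pole of order prime to $p$, so $\mathcal{L}$ is wildly ramified there and cannot be trivialized by tensoring with the everywhere-tame Kummer part --- this is what makes the argument go through even when some or all of the $\chi_i$ are trivial, a case the theorem's hypotheses permit; second, the convention $\chi_i(0)=0$ is exactly what identifies the sum in the statement (taken over all $a$ with $f_{n+1}(a)\neq\infty$) with the trace sum over $U(\mathbb{F}_{q^m})$, since the terms at zeros of the $f_i$ vanish. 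With those details filled in, your argument is a complete and correct proof of the quoted theorem.
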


\noindent We specify the parameters to obtain various character sum bounds.

\begin{cor}\label{weilBounds}
Let $\psi_{\text{Tr}}=\psi_p \circ \text{Tr}_{\mathbb{F}_q / \mathbb{F}_p}$ be as above, $\psi:\mathbb{F}_q\to \mathbb{C}^*$ a non-trivial additive character, and $\eta:\mathbb{F}_q^*\to \mathbb{C}^*$ the quadratic character if $q$ is odd. Set $m=1$.
\begin{enumerate}
\item If $f_{1}(x)=D_n(x,a)$ with $a\ne 0$:
\begin{equation*}
\left| \sum_{x\in \mathbb{F}_q} \psi(D_n(x,a))\right| \leq (n-1)\sqrt{q}
\end{equation*}
\item If $q$ is odd, $f_1(x)=x^2-4a$, and $f_{2}(x)=D_n(x,a)$ with $a\ne 0$:
\begin{equation*}
\left| \sum_{x\in \mathbb{F}_q} \eta(x^2-4a)\psi(D_n(x,a))\right| \leq (n+1)\sqrt{q}
\end{equation*}
\item If $q$ is even and $f_1(x)=b D_n(x,a)+a/x^2$ with $a,b\ne 0$:
\begin{align*}
\left|\sum_{x\in \mathbb{F}_q^*} \psi_{\text{Tr}}\left( b D_n(x,a)+a/x^2\right)\right| &=  \left| \sum_{x\in \mathbb{F}_q^*} \psi_{\text{Tr}}\left( b D_n(x,a)+a^{q/2}/x\right) \right| \\ 
&\leq (n+1)\sqrt{q}
\end{align*}
\end{enumerate}
\end{cor}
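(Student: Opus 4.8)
The plan is to derive all three estimates as specializations of the general Weil bound stated above, taking $m=1$. In each case the work is to select the multiplicative and additive character data, identify $f_{n+1}$, and then read off the four degree quantities $D_1,D_2,D_3,D_4$; the bound drops out as $(D_1+D_2+D_3+D_4-1)\sqrt q$. Throughout I interpret $D_2$ as the order of the pole of $f_{n+1}$ at infinity, i.e. $\deg(\text{num})-\deg(\text{denom})$ when this is positive and $0$ otherwise, which is the reading forced by the target constants. Before invoking the theorem I must check in each case that $f_{n+1}$ is not of the excluded Artin--Schreier shape $r(t)^p-r(t)+c$.

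For (1) I would take no multiplicative characters, so the product $\prod_i f_i$ is empty and $D_1=0$, and set $f_{n+1}=D_n(x,a)$. Since $D_n(x,a)$ is a polynomial of degree $n$, its only pole is at infinity, of order $n$, giving $D_2=n$, while the denominator is constant so $D_3=D_4=0$; this yields $(0+n+0+0-1)\sqrt q=(n-1)\sqrt q$. For (2) I would keep $f_{n+1}=D_n(x,a)$ (so again $D_2=n$ and $D_3=D_4=0$) but now attach the quadratic character $\eta$ to $f_1=x^2-4a$. Because $q$ is odd and $a\ne 0$, the discriminant $16a$ is nonzero, so $x^2-4a$ is square-free of degree $2$ and $D_1=2$; the bound becomes $(2+n-1)\sqrt q=(n+1)\sqrt q$. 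In both parts the only nonroutine point is confirming $D_n(x,a)\ne r(x)^p-r(x)+c$: when $p\nmid n$ this is immediate from comparing leading terms, since $r(x)^p-r(x)$ has degree $p\deg(r)$ divisible by $p$, while when $p\mid n$ it follows from the explicit monomial structure of $D_n$ (for instance $D_p(x,a)=x^p$, which is not of Artin--Schreier type).

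For (3) I would first justify the stated equality of the two sums. Working in characteristic two, the identity $\text{Tr}(z^2)=\text{Tr}(z)$ together with $a^{1/2}=a^{q/2}$ gives $\text{Tr}(a/x^2)=\text{Tr}\big((a^{q/2}/x)^2\big)=\text{Tr}(a^{q/2}/x)$, so the two summands agree termwise and the sums are equal. I would then apply the theorem with no multiplicative characters and $f_{n+1}=bD_n(x,a)+a^{q/2}/x$. Here $D_1=0$; writing $f_{n+1}$ over the common denominator $x$ gives a numerator of degree $n+1$, hence a pole of order $n$ at infinity and $D_2=n$; the denominator $x$ is square-free of degree one and, the product $\prod_i f_i$ being empty, it is trivially coprime to it, so $D_3=D_4=1$. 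This produces $(0+n+1+1-1)\sqrt q=(n+1)\sqrt q$. The Artin--Schreier exclusion is automatic here, since $f_{n+1}$ has a pole of order exactly one at $x=0$, whereas any $r(x)^2-r(x)+c$ has finite poles of order $0$ or even, so the two cannot coincide.

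The main obstacle I anticipate is bookkeeping rather than depth: getting the rational-function invariants right in (3) (in particular reading $D_2$ as the pole order at infinity rather than the numerator degree, which would spuriously inflate the bound to $(n+2)\sqrt q$) and dispatching the Artin--Schreier hypothesis cleanly in (1) and (2) in the degenerate range $p\mid n$, where the leading-term argument alone does not settle it and one must fall back on the explicit coefficients of $D_n(x,a)$.
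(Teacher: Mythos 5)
Your proposal is correct and is essentially the paper's own argument: the paper likewise obtains all three bounds by specializing the Fu--Wan theorem with exactly your character data and degree bookkeeping (reading $D_2$ as the pole order at infinity, so that part~3 gives $0+n+1+1-1=n+1$ after rewriting $a/x^2$ as $a^{q/2}/x$ via $\mathrm{Tr}(z^2)=\mathrm{Tr}(z)$), and its only explicit verification is the Artin--Schreier exclusion in part~3, which you prove by the cleaner pole-parity argument in place of the paper's computation with $r(t)=d/x+f(x)$. Your treatment is if anything more careful than the paper's, which silently skips the Artin--Schreier check for parts~1 and~2 (including the $p\mid n$ case you flag, where $D_n(x,a)=D_{n/p}(x,a)^p$ lets the reduction you sketch go through).
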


Note that none of the polynomials in place of $f_{n+1}(x)$ are of the form $r(t)^2-r(t)+c$. For instance, in part 3, such an $r$ would have to take the form $r(t)=d/x+f(x)$, where $f(x)$ is a polynomial. Expanding this shows that $d^2/x^2+d/x=a^{q/2}/x$, or $d=0$, which is a contradiction.

\begin{lem}\label{dicksonWeilEven}
Let $D=\{D_n(x,a)\ |\ x\in \mathbb{F}_q\}$, for $a\in \mathbb{F}_q^*$. If $\psi:(\mathbb{F}_q,+)\to \mathbb{C}^*$ is a non-trivial additive character, then the following estimate holds:
\begin{equation*}
\left|\sum_{x\in D} \psi(x) \right|\leq (n+1)\sqrt{q}. 
\end{equation*}
\end{lem}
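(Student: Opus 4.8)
The plan is to recover the unweighted sum over the value set $D$ from the weighted sums over all of $\mathbb{F}_q$ that Corollary~\ref{weilBounds} supplies, using the fiber structure of $D_n$ recorded in Theorem~\ref{dicksonTheorem}. I will treat the case $q$ even, where the proof uses part~3 of the corollary; the odd case runs along the same lines. Write $e_1 = \gcd(n,q-1)$ and $e_2 = \gcd(n,q+1)$, which are coprime odd integers when $q$ is even. Since every nontrivial additive character has the form $\psi(z) = \psi_{\text{Tr}}(bz)$ with $b\ne 0$, I first normalise to this shape, so that $\psi(D_n(x,a))\,\psi_{\text{Tr}}(a/x^2) = \psi_{\text{Tr}}\bigl(bD_n(x,a) + a/x^2\bigr)$. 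The point is that Theorem~\ref{dicksonTheorem} splits $D$, away from the single exceptional value $0$ (one checks $D_n(0,a)=0$ in characteristic two), into a part $D_A$ whose points each have $e_1$ preimages and a part $D_B$ whose points each have $e_2$ preimages, and that this split is detected pointwise by $\psi_{\text{Tr}}(a/x^2)$, which equals $+1$ exactly when $t^2+xt+a$ is reducible (the $D_A$ case) and $-1$ when it is irreducible (the $D_B$ case).

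First I would check that the type is constant along fibers. Using $x = u + a/u$ and the identity $D_n(u+a/u,a) = u^n + a^n/u^n$, the equation $D_n(x,a)=v$ forces $u^n$ to be a root of $w^2 - vw + a^n$; the remaining preimages arise by multiplying $u$ by the $n$-th roots of unity, which keeps $u$ inside $\mathbb{F}_q^*$ for $D_A$-values and inside the set of norm-$a$ elements of $\mathbb{F}_{q^2}^*$ for $D_B$-values. Hence $\psi_{\text{Tr}}(a/x^2)$ is constant on each fiber, and summing over $x\in\mathbb{F}_q^*$ collapses into sums over $D$. Writing $\Sigma_A = \sum_{v\in D_A}\psi(v)$ and $\Sigma_B = \sum_{v\in D_B}\psi(v)$, I obtain
\begin{align*}
\sum_{x\in\mathbb{F}_q^*}\psi(D_n(x,a)) &= e_1\Sigma_A + e_2\Sigma_B + \varepsilon_1, \\
\sum_{x\in\mathbb{F}_q^*}\psi(D_n(x,a))\,\psi_{\text{Tr}}(a/x^2) &= e_1\Sigma_A - e_2\Sigma_B + \varepsilon_2,
\end{align*}
where $\varepsilon_1,\varepsilon_2$ collect the contributions of the fiber over $0$ and the omitted point $x=0$, both of modulus $O(n)$. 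The first left-hand side differs from the part-1 sum over all of $\mathbb{F}_q$ by the single term $\psi(0)$, hence is at most $(n-1)\sqrt q + 1$, while the second is exactly the part-3 sum, at most $(n+1)\sqrt q$.

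Next I would solve this $2\times 2$ system for $e_1\Sigma_A$ and $e_2\Sigma_B$, divide by $e_1$ and $e_2$, and reassemble $\sum_{v\in D}\psi(v) = \Sigma_A + \Sigma_B + \psi(0)$. Denoting the two left-hand sides by $U$ and $V$, the dominant part of the result is
\begin{equation*}
\frac{U}{2}\cdot\frac{e_1+e_2}{e_1e_2} + \frac{V}{2}\cdot\frac{e_2-e_1}{e_1e_2},
\end{equation*}
whose modulus is at most $\bigl(\tfrac{n}{\min(e_1,e_2)} - \tfrac{1}{\max(e_1,e_2)}\bigr)\sqrt q < n\sqrt q$ once the two Weil bounds are inserted; crucially the larger bound $(n+1)\sqrt q$ is attached only to the small coefficient $|e_2-e_1|/(e_1e_2)$. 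The $O(n)$ corrections from $\varepsilon_1,\varepsilon_2$ and $\psi(0)$ are then absorbed by a dichotomy: if $n\ge\sqrt q$ the target bound is trivial since $(n+1)\sqrt q > q \ge |D|$, while if $n<\sqrt q$ the slack of roughly $\sqrt q$ between $n\sqrt q$ and $(n+1)\sqrt q$ covers the $O(n)$ terms.

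The main obstacle is the structural input of the middle step: one must prove rigorously that $\psi_{\text{Tr}}(a/x^2)$ is genuinely constant on each fiber and that the exceptional fiber over $0$ contributes only a bounded error, which is precisely where the explicit preimage counts of Theorem~\ref{dicksonTheorem} and the $u+a/u$ parametrisation must be combined with care. A secondary difficulty is tracking the constant so that, after dividing by $e_1$ and $e_2$, it never exceeds $n+1$; this leans on the coprimality of $e_1$ and $e_2$ in even characteristic together with the dichotomy above. For odd $q$ the same argument applies with only notational changes, using $\eta(x^2-4a)$ as the type indicator and part~2 of Corollary~\ref{weilBounds} in place of part~3, the (at most two) exceptional values $\pm 2a^{n/2}$ now playing the role of the single exceptional value $0$.
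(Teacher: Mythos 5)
Your proposal is correct in substance, and it uses exactly the paper's two key ingredients --- the Chou--Mullen--Wassermann fiber counts of Theorem~\ref{dicksonTheorem} and parts~1 and~3 (resp.~1 and~2) of Corollary~\ref{weilBounds} --- combined in the identical linear combination: your dominant term $\frac{U}{2}\cdot\frac{e_1+e_2}{e_1e_2}+\frac{V}{2}\cdot\frac{e_2-e_1}{e_1e_2}$ is precisely the paper's expression with coefficients $\bigl(\frac{1}{2e_1}+\frac{1}{2e_2}\bigr)$ and $\bigl(\frac{1}{2e_1}-\frac{1}{2e_2}\bigr)$. The genuine difference is the direction of the bookkeeping. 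You sum over the \emph{range} with multiplicities ($U=e_1\Sigma_A+e_2\Sigma_B+\varepsilon_1$, etc.) and invert a $2\times 2$ system; the paper sums over the \emph{domain} with weights, writing $\sum_{y\in D}\psi(y)=\sum_{x\in\mathbb{F}_q}\psi(D_n(x,a))/N_x$ and inserting the indicators $\frac{1}{2}\bigl(1\pm\psi_{\text{Tr}}(a/x^2)\bigr)$ (resp.\ $\frac{1}{2}\bigl(1\pm\eta(x^2-4a)\bigr)$). This inversion is not free: your version requires the fiber-purity lemma you flag as the main obstacle, whereas the paper's weighted identity holds trivially by the definition of $N_x$, with Theorem~\ref{dicksonTheorem} applied pointwise at each $x$, so no purity statement is ever needed. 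Moreover, in the paper's form the weights $1/N_x$ normalise the exceptional fiber (over $0$, resp.\ over $\pm 2a^{n/2}$) to a contribution of modulus $O(1)$, so the final slack $2\le\sqrt{q}$ closes the bound directly, with no need for your $n\ge\sqrt{q}$ dichotomy; your range-side accounting inflates the exceptional contribution to $O(n)$ (roughly $\frac{(e_1+e_2)^2}{4e_1e_2}$ after dividing through), which is why you need that dichotomy at all. For the record, your purity claim is true and provable as you sketch: a mixed fiber over a non-exceptional value would force $e_1=e_2$ dividing $\gcd(q-1,q+1)$, and the $u+a/u$ parametrisation (with $u^{q+1}=a$ detecting the irreducible type) shows any mixing forces $w^2=a^n$, i.e.\ $v=\pm 2a^{n/2}$. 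So your route works and makes the fiber structure of $D_n$ explicit, at the cost of an extra lemma and looser error terms; the paper's weighting is shorter, uniform in $q$, and needs no case analysis on $n$ versus $\sqrt{q}$.
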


\begin{proof}
The sum can be rewritten in the following way:
\begin{equation*}
\sum_{y\in D} \psi(y) = \sum_{x\in \mathbb{F}_q} \psi(D_n(x,a)) \frac{1}{N_x}, 
\end{equation*}
where $N_x=|D_n^{-1}(D_n(x,a))|$ is size of the preimage of the value $D_n(x,a)$. \newline

\noindent \textbf{When $q$ is even:}

\noindent By Theorem \ref{dicksonTheorem}, $N_x$ can be quantified. Let $\text{Tr}:\mathbb{F}_q\to \mathbb{F}_2$ denote the absolute trace. Using the fact that $z^2+xz+a$ is reducible over $\mathbb{F}_q$ if and only if $\text{Tr}(a/x^2)=0$,
\begin{align*}
&=\sum_{\substack{x\in \mathbb{F}_q^* \\ \text{Tr}(a/x^2)=0}} \frac{1}{\gcd(n, q-1)} \psi(D_n(x,a)) + \sum_{\substack{x\in \mathbb{F}_q^* \\ \text{Tr}(a/x^2)=1}} \frac{1}{\gcd(n, q+1)} \psi(D_n(x,a)) \\
&\phantom{=}\phantom{=} + \frac{1}{\gcd(n, q-1)}\psi(D_n(0,a)) + O(1), 
\end{align*}
where $O(1)$ is a constant of size at most 1, which we accept by dropping the $D_n(x,a)=0$ case. Denote $\psi_1: \mathbb{F}_2\to \mathbb{C}^*$ as the order two additive character and $\psi_{\text{Tr}}=\psi_1\circ \text{Tr}$, which is an additive character from $\mathbb{F}_q\to \mathbb{C}^*$. Simplifying and rearranging gives

\begin{align*}
&= \frac{1}{2\gcd(n,q-1)} \sum_{x\in \mathbb{F}_q^*} \psi(D_n(x,a))(1+\psi_{\text{Tr}}(a/x^2))  \\ &\phantom{=}\phantom{=} + \frac{1}{2\gcd(n, q+1)} \sum_{x\in \mathbb{F}_q^*} \psi(D_n(x,a))(1-\psi_{\text{Tr}}(a/x^2)) + \frac{1}{\gcd(n, q-1)}\psi(D_n(0,a)) + O(1) \\
&= \left( \frac{1}{2\gcd(n,q-1)} + \frac{1}{2\gcd(n, q+1)} \right) \sum_{x\in \mathbb{F}_q^*} \psi(D_n(x,a)) \\ &\phantom{=}\phantom{=} + \left( \frac{1}{2\gcd(n,q-1)} - \frac{1}{2\gcd(n, q+1)} \right) \sum_{x\in \mathbb{F}_q^*} \psi(D_n(x,a))\psi_{\text{Tr}}(a/x^2) \\
&\phantom{=}\phantom{=}\phantom{=}\phantom{=} + \frac{1}{\gcd(n, q-1)}\psi(D_n(0,a)) + O(1). 
\end{align*}
We add and subtract $\left(\frac{1}{2\gcd(n,q-1)} + \frac{1}{2\gcd(n, q+1)} \right)\psi(D_n(0,a))$ to complete the first sum:
\begin{align*}
&= \left( \frac{1}{2\gcd(n,q-1)} + \frac{1}{2\gcd(n, q+1)} \right) \sum_{x\in \mathbb{F}_q} \psi(D_n(x,a)) \\
&\phantom{=}\phantom{=} + \left( \frac{1}{2\gcd(n,q-1)} - \frac{1}{2\gcd(n, q+1)} \right) \sum_{x\in \mathbb{F}_q^*} \psi(D_n(x,a))\psi_{\text{Tr}}(a/x^2) \\
&\phantom{=}\phantom{=}\phantom{=}\phantom{=} + \left( \frac{1}{2\gcd(n,q-1)} -\frac{1}{2\gcd(n, q+1)} \right)\psi(D_n(0,a)) + O(1). 
\end{align*}
In order to estimate the sum in second term, take $b\in \mathbb{F}_q$ so that $\psi(x)=\psi_{\text{Tr}}(bx)$. Then,
\begin{equation*}
\sum_{x\in \mathbb{F}_q^*} \psi(D_n(x,a))\psi_{\text{Tr}}(a/x^2) = \sum_{x\in \mathbb{F}_q^*} \psi_{\text{Tr}}(bD_n(x,a)+a/x^2). 
\end{equation*}
Applying the bounds in Corollary \ref{weilBounds},
\begin{align*}
\left|\sum_{y\in D} \psi(y)\right| &\leq \left( \frac{1}{2\gcd(n,q-1)} + \frac{1}{2\gcd(n, q+1)} \right)(n-1)\sqrt{q} \\
&\phantom{=}\phantom{=}+ \left| \frac{1}{2\gcd(n,q-1)} - \frac{1}{2\gcd(n, q+1)} \right| (n+1)\sqrt{q} + 2 \\
&\leq (n+1)\sqrt{q}.
\end{align*}

\noindent \textbf{When $q$ is odd:}

\noindent We use Theorem \ref{dicksonTheorem} again to calculate $N_x$. Let $\eta$ be the quadratic character of $\mathbb{F}_q$.
\begin{equation*}
=\sum_{\substack{x\in \mathbb{F}_q \\ \eta(x^2-4a)=1}} \frac{1}{\gcd(n, q-1)} \psi(D_n(x,a)) + \sum_{\substack{x\in \mathbb{F}_q \\ \eta(x^2-4a)=-1}} \frac{1}{\gcd(n, q+1)} \psi(D_n(x,a)) + O(1). 
\end{equation*}
The term $O(1)$ is a constant of size at most 2, which we accept by dropping the complicated `condition C' and `otherwise' cases. Simplifying and rearranging gives
\begin{align*}
&= \frac{1}{2\gcd(n,q-1)} \sum_{x\in \mathbb{F}_q} \psi(D_n(x,a))(1+\eta(x^2-4a))  \\ &\phantom{=}\phantom{=} + \frac{1}{2\gcd(n, q+1)} \sum_{x\in \mathbb{F}_q} \psi(D_n(x,a))(1-\eta(x^2-4a)) + O(1) \\
&= \left( \frac{1}{2\gcd(n,q-1)} + \frac{1}{2\gcd(n, q+1)} \right) \sum_{x\in \mathbb{F}_q} \psi(D_n(x,a)) \\ &\phantom{=}\phantom{=} + \left( \frac{1}{2\gcd(n,q-1)} - \frac{1}{2\gcd(n, q+1)} \right) \sum_{x\in \mathbb{F}_q} \psi(D_n(x,a))\eta(x^2-4a) + O(1). 
\end{align*}
Again applying the bounds in Corollary \ref{weilBounds},
\begin{align*}
\left|\sum_{x\in D} \psi(x) \right|&\leq \left( \frac{1}{2\gcd(n,q-1)} + \frac{1}{2\gcd(n, q+1)} \right)(n-1)\sqrt{q} \\ &\phantom{=}\phantom{=}+ \left| \frac{1}{2\gcd(n,q-1)} - \frac{1}{2\gcd(n, q+1)} \right|(n+1)\sqrt{q} + 2\\
&\leq (n+1)\sqrt{q}, 
\end{align*}
which was to be shown.
\end{proof}

\subsubsection{Li-Wan's New Sieve}

We also state Li-Wan's new sieve (as in \cite{LW, WZ}) to estimate a particular type of character sum: let $D$ be a finite set and $D^k=D\times D\times \cdots \times D$ be the Cartesian product of $k$ copies of $D$. Let $X$ be a subset of $D^k$. Denote
\begin{equation*}
\overline{X} = \{(x_1, x_2,\ldots, x_k)\in X\ |\ x_i\ne x_j, i\ne j\}
\end{equation*}

\noindent Let $f(x_1, x_2,\ldots, x_k)$ be a complex-valued function defined over $X$. Denote
\begin{equation*}
F=\sum_{\mathbf{x}\in \overline{X}} f(x_1, x_2,\ldots, x_k)
\end{equation*}

\noindent Let $S_k$ be the symmetric group on $\{1, 2,\ldots, k\}$. Each permutation $\tau\in S_k$ can be uniquely factorised as a product of disjoint cycles and each fixed point is viewed as a trivial cycle of length 1. Namely,
\begin{equation*}
\tau=(i_1i_2\ldots i_{a_1})(j_1j_2\ldots j_{a_2})\cdots (l_1l_2\ldots l_{a_s})
\end{equation*}

\noindent with $a_i\geq 1$ and $1\leq i\leq s$. Define
\begin{equation*}
X_\tau = \{(x_1, x_2,\ldots, x_k)\ |\ x_{i_1}=\ldots=x_{i_{a_1}}, x_{j_1}=\ldots= x_{j_{a_2}}, \cdots, x_{l_1}=\ldots=x_{l_{a_s}} \}
\end{equation*}

\noindent Similarly define
\begin{equation*}
F_\tau= \sum_{\mathbf{x}\in X_\tau} f(x_1,x_2,\ldots, x_k)
\end{equation*}

\noindent We say that $\tau$ is of the type $(c_1, c_2,\ldots, c_k)$ if it has exactly $c_i$ cycles of length $i$. Let $N(c_1, c_2,\ldots, c_k)$ be the number of permutations of type $(c_1, c_2,\ldots, c_k)$. Define
\begin{equation*}
C_k(t_1, t_2,\ldots, t_k)=\sum_{\sum ic_i=k} N(c_1, c_2,\ldots, c_k)t_1^{c_1} t_2^{c_2}\cdots t_k^{c_k}
\end{equation*}

\noindent Now we have the following combinatorial result:
\begin{lem}
Suppose $q\geq d$. If $t_i=q$ for $d|i$ and $t_i=s$ for $d \nmid i$, then we have
\begin{align*}
C_k(s,\ldots s,q,s,\ldots,x,q,\ldots)&=k! \sum_{i=0}^{\lfloor k/d\rfloor} \binom{\frac{q-s}{d}+i-1}{i} \binom{s+k-di-1}{k-di} \\
&\leq \left(s+k+\frac{q-s}{d}-1\right)_k
\end{align*}
where $(x)_k=x(x-1)(x-2)\cdots (x-k+1)$.
\end{lem}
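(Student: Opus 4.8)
The plan is to treat $C_k/k!$ as the cycle-index polynomial of the symmetric group $S_k$ and to read off both the identity and the bound from its exponential generating function. Recall that the number of permutations of type $(c_1,\dots,c_k)$ is $N(c_1,\dots,c_k)=k!/\prod_i i^{c_i}c_i!$, so that $C_k(t_1,\dots,t_k)/k!$ is precisely the cycle index of $S_k$ evaluated at $(t_1,\dots,t_k)$. The exponential formula then yields
\[
\sum_{k\ge 0}\frac{C_k(t_1,\dots,t_k)}{k!}\,z^k=\exp\left(\sum_{i\ge 1}\frac{t_i}{i}z^i\right).
\]
First I would substitute the special values $t_i=q$ for $d\mid i$ and $t_i=s$ for $d\nmid i$. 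Writing the exponent as $s\sum_{i\ge 1}z^i/i+(q-s)\sum_{d\mid i}z^i/i$ and using $\sum_{i\ge 1}z^i/i=-\log(1-z)$ together with $\sum_{d\mid i}z^i/i=-\tfrac1d\log(1-z^d)$, the generating function collapses to the closed form
\[
\sum_{k\ge 0}\frac{C_k}{k!}\,z^k=(1-z)^{-s}\,(1-z^d)^{-\frac{q-s}{d}}.
\]

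To obtain the claimed equality I would expand each factor by the generalized binomial theorem, $(1-z)^{-s}=\sum_m\binom{s+m-1}{m}z^m$ and $(1-z^d)^{-(q-s)/d}=\sum_i\binom{\frac{q-s}{d}+i-1}{i}z^{di}$, and then collect the coefficient of $z^k$ by setting $m=k-di$. This reproduces the stated sum over $0\le i\le\lfloor k/d\rfloor$ verbatim, with the constraint $d\mid(k-m)$ forcing exactly the range on $i$.

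For the inequality I would rewrite the target $\left(s+k+\frac{q-s}{d}-1\right)_k$ as $k!\binom{s+u+k-1}{k}$ with $u:=\frac{q-s}{d}$, i.e. as $k!$ times the coefficient of $z^k$ in $(1-z)^{-(s+u)}=(1-z)^{-s}(1-z)^{-u}$. Comparing with the closed form above, the two quantities share the factor $(1-z)^{-s}$, whose coefficients are nonnegative; hence it suffices to show that $(1-z^d)^{-u}$ is dominated coefficient-by-coefficient by $(1-z)^{-u}$. Via the Vandermonde convolution $\binom{s+u+k-1}{k}=\sum_{j=0}^{k}\binom{u+j-1}{j}\binom{s+k-j-1}{k-j}$, this reduces to the term-wise comparison $\binom{u+i-1}{i}\le\binom{u+di-1}{di}$ for each $i$ (matching the index $j=di$), after which the remaining terms with $d\nmid j$ are nonnegative and only enlarge the right-hand side.

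The main obstacle is precisely this last term-wise inequality, which is where positivity of the parameters enters. The map $m\mapsto\binom{u+m-1}{m}$ is the coefficient sequence of $(1-z)^{-u}$, and since $\binom{u+m}{m+1}/\binom{u+m-1}{m}=(u+m)/(m+1)\ge 1$ exactly when $u\ge 1$, that sequence is non-decreasing, so $\binom{u+i-1}{i}\le\binom{u+di-1}{di}$ follows from $di\ge i$. I therefore expect the delicate point to be verifying $u=\frac{q-s}{d}\ge 1$ (which the standing hypothesis $q\ge d$ secures in the regime where the sieve is applied) and confirming that all generalized binomial coefficients stay nonnegative, so that discarding the off-multiple terms in the convolution is legitimate.
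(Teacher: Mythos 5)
The paper itself does not prove this lemma --- it imports it verbatim from Li--Wan's sieve paper \cite{LW} --- and your generating-function route is precisely the original one: $N(c_1,\ldots,c_k)=k!/\prod_i i^{c_i}c_i!$ identifies $C_k/k!$ with the cycle index of $S_k$, the exponential formula gives $\sum_k (C_k/k!)z^k=\exp\bigl(\sum_i t_i z^i/i\bigr)$, the substitution collapses this to $(1-z)^{-s}(1-z^d)^{-(q-s)/d}$, and extracting the coefficient of $z^k$ yields the stated sum. That half of your argument is complete and correct.

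The inequality half has a genuine gap, and it sits exactly where you flagged it. Your matching of the lemma's term $i$ with the Vandermonde index $j=di$ needs $\binom{u+i-1}{i}\le\binom{u+di-1}{di}$ with $u=\frac{q-s}{d}$, hence $u\ge 1$, i.e.\ $q\ge s+d$; but your parenthetical claim that the standing hypothesis $q\ge d$ secures this is false whenever $s>0$ is appreciable --- e.g.\ $q=d$ forces $u=1-s/d<1$ for any $s>0$, and for $0<u<1$ the sequence $m\mapsto\binom{u+m-1}{m}$ is strictly \emph{decreasing}, so the comparison fails outright (take $u=\tfrac12$, $d=2$, $i=1$: $\binom{1/2}{1}=\tfrac12>\binom{3/2}{2}=\tfrac38$). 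Likewise, discarding the off-multiple Vandermonde terms needs $u\ge 0$, i.e.\ $q\ge s$, which the literal hypothesis also does not supply (the transcription of the lemma in this paper is in any case garbled --- note the stray $x$ in the argument list --- and must be read with the nonnegativity regime $q\ge s$ in mind). The repair stays inside your framework: match the lemma's term $i$ with the Vandermonde index $j=i$ instead of $j=di$. The first factors $\binom{u+i-1}{i}$ then agree, and since $k-di\le k-i$ the needed comparison becomes $\binom{s+k-di-1}{k-di}\le\binom{s+k-i-1}{k-i}$, which holds by your own ratio computation whenever $s\ge 1$; together with your case $u\ge 1$ this covers every regime in which the sieve is applied here, since in the main theorem the lemma is invoked with $s=(n+1)\sqrt{q}\ge 1$, lemma-$q$ equal to $|D|$, $d=2$, and $\frac{n+1}{2}\sqrt{q}<c_1|D|$ making $u=\frac{|D|-(n+1)\sqrt{q}}{2}\ge 1$ as well. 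So your instinct about the delicate point was right, but the resolution you offered does not follow from $q\ge d$ and needs this two-case patch.
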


\noindent Furthermore, we say that $X$ is symmetric if for any $x\in X$ and any $g\in S_k$, we have $g\circ x\in X$. Also, if a complex-valued function $f$ is defined on $X$, we say that it is normal on $X$ if $X$ is symmetric and for any two conjugate elements in $S_k$, $\tau$ and $\tau'$, we have
\begin{equation*}
\sum_{x\in X_\tau} f(x_1, x_2,\ldots, x_k)=\sum_{x\in X_{\tau'}} f(x_1, x_2,\ldots, x_k)
\end{equation*}

\noindent Then, we have the result:
\begin{lem}
If $f$ is normal on $X$, then
\begin{equation*}
F=\sum_{\sum ic_i=k} (-1)^{k-\sum c_i} N(c_1, c_2,\ldots, c_k) F_\tau
\end{equation*}
\end{lem}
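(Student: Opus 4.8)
The plan is to establish the identity by a sign-reversing inclusion--exclusion argument organised around the equality patterns of the tuples in $X$. For a tuple $\mathbf{x}=(x_1,\ldots,x_k)$, let $\pi(\mathbf{x})$ denote the set partition of $\{1,\ldots,k\}$ defined by $i\sim j$ if and only if $x_i=x_j$; thus $\mathbf{x}\in\overline{X}$ is exactly the condition that $\pi(\mathbf{x})$ be the partition into singletons. Write $c(\tau)=\sum_i c_i$ for the number of cycles of a permutation $\tau$ of type $(c_1,\ldots,c_k)$. The first step is to recast the right-hand side as a sum over all of $S_k$. Since two permutations of $S_k$ are conjugate precisely when they share a cycle type, and since $f$ is normal (so $F_\tau=F_{\tau'}$ whenever $\tau,\tau'$ are conjugate), the quantity $F_\tau$ depends only on the type $(c_1,\ldots,c_k)$. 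Hence $N(c_1,\ldots,c_k)F_\tau=\sum_{\tau'}F_{\tau'}$, the sum running over the $N(c_1,\ldots,c_k)$ permutations of that type, and the claimed right-hand side becomes
\begin{equation*}
\sum_{\sum ic_i=k}(-1)^{k-\sum c_i}N(c_1,\ldots,c_k)F_\tau=\sum_{\tau\in S_k}(-1)^{k-c(\tau)}F_\tau.
\end{equation*}

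Next I would expand $F_\tau=\sum_{\mathbf{x}\in X_\tau}f(\mathbf{x})$ and interchange the order of summation, isolating the coefficient attached to each fixed $\mathbf{x}\in X$. The membership $\mathbf{x}\in X_\tau$ holds exactly when the coordinates of $\mathbf{x}$ are constant along every cycle of $\tau$, that is, when each cycle of $\tau$ lies inside a single block of $\pi(\mathbf{x})$; so the permutations contributing to $\mathbf{x}$ are precisely those preserving every block of $\pi(\mathbf{x})$. If $\pi(\mathbf{x})$ has blocks $B_1,\ldots,B_m$ of sizes $b_1,\ldots,b_m$, each such $\tau$ factors uniquely as $\tau=\tau_1\cdots\tau_m$ with $\tau_l\in\mathrm{Sym}(B_l)$ and $c(\tau)=\sum_l c(\tau_l)$, so the coefficient of $f(\mathbf{x})$ factors as
\begin{equation*}
\sum_{\tau:\,\mathbf{x}\in X_\tau}(-1)^{k-c(\tau)}=\prod_{l=1}^{m}\left(\sum_{\tau_l\in\mathrm{Sym}(B_l)}(-1)^{b_l-c(\tau_l)}\right).
\end{equation*}

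The crux is then a single-block identity. Starting from the standard cycle generating function $\sum_{\tau\in S_b}z^{c(\tau)}=z(z+1)\cdots(z+b-1)$ and performing the substitution $z\mapsto-z$, one obtains $\sum_{\tau\in S_b}(-1)^{b-c(\tau)}z^{c(\tau)}=z(z-1)\cdots(z-b+1)=(z)_b$ in the falling-factorial notation introduced above. Evaluating at $z=1$ yields $\sum_{\tau\in S_b}(-1)^{b-c(\tau)}=(1)_b$, which equals $1$ for $b=1$ and $0$ for $b\geq2$. Therefore each factor in the product vanishes unless its block is a singleton, so the coefficient of $f(\mathbf{x})$ is $0$ unless every block of $\pi(\mathbf{x})$ is a singleton, i.e.\ unless $\mathbf{x}\in\overline{X}$, in which case it is $1$. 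The surviving terms are exactly $\sum_{\mathbf{x}\in\overline{X}}f(\mathbf{x})=F$, which is the assertion.

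The step I expect to demand the most care is the bookkeeping in the first paragraph: one must invoke both hypotheses---that $X$ is symmetric and that $f$ is normal---to justify replacing the representative weight $N(c_1,\ldots,c_k)F_\tau$ by a genuine sum over the entire conjugacy class, and to confirm that each $X_\tau$ is cut out of $X$ (rather than of $D^k$) so that the interchange of summation is valid. Conceptually the whole computation is Möbius inversion over the partition lattice $\Pi_k$, but the sign-reversing form above sidesteps an explicit evaluation of the Möbius function; after the right-hand side is rewritten as $\sum_{\tau\in S_k}(-1)^{k-c(\tau)}F_\tau$, the only genuinely nontrivial ingredient left is the vanishing of $(1)_b$ for $b\geq2$, which is elementary.
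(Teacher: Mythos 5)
Your proof is correct: the reduction via normality to $\sum_{\tau\in S_k}(-1)^{k-c(\tau)}F_\tau$, the identification of $\{\tau : \mathbf{x}\in X_\tau\}$ with the Young subgroup $\prod_l \mathrm{Sym}(B_l)$ of the partition $\pi(\mathbf{x})$, and the vanishing of $\sum_{\tau\in S_b}(-1)^{b-c(\tau)}=(1)_b$ for $b\geq 2$ are all sound, and you rightly flag that $X_\tau$ must be read as cut out of $X$ (which is where the symmetry hypothesis earns its keep). The paper itself states this lemma without proof, importing it from Li--Wan \cite{LW, WZ}, and your sign-reversing argument is essentially the standard proof given there, so there is no divergence to report.
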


\subsubsection{A Rephrasing of Error Distance}

A simple argument shows that error distance can be rephrased in the following way:

\begin{lem}\label{errorD}
Let $\mathcal{C}$ be a Reed-Solomon code over $\mathbb{F}_q$ using the evaluation set $D$. Let $u$ be a received word and $u(x)$ be its interpolated polynomial with $\deg u(x)=k+1$. The error distance satisfies $d(u,\mathcal{C})\leq |D|-k-1$ if and only if there exists a subset $\{x_{i_1}, x_{i_2}, \ldots, x_{i_{k+1}}\}\subset D$ such that
\begin{equation*}
u(x)-v(x)=(x-x_{i_1})(x-x_{i_2})\cdots (x-x_{i_{k+1}})
\end{equation*}
for some $v(x)$ with $\deg v(x)\leq k-1$.
\end{lem}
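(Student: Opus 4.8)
The plan is to prove the equivalence by unwinding both sides in terms of the interpolated polynomial $u(x)$ of degree exactly $k+1$. The essential observation is that the error distance $d(u,\mathcal{C})$ counts, over all codewords $c \in \mathcal{C}$ (equivalently, all $v(x)$ with $\deg v(x) \le k-1$), the minimum number of coordinates where $u$ disagrees with $c$. A coordinate $x_i \in D$ is an \emph{agreement} precisely when $u(x_i) = v(x_i)$, i.e.\ when $x_i$ is a root of $u(x) - v(x)$. So minimizing the Hamming distance is the same as maximizing the number of roots of $u(x)-v(x)$ lying in $D$, as $v$ ranges over polynomials of degree at most $k-1$.

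First I would make this precise: for a fixed $v(x)$ with $\deg v(x) \le k-1$, since $\deg u(x) = k+1$, the difference $g(x) := u(x) - v(x)$ has degree exactly $k+1$ (the top two coefficients of $u$ are untouched by $v$). A degree-$(k+1)$ polynomial has at most $k+1$ roots in $\mathbb{F}_q$, hence at most $\min(k+1, |D|)$ roots in $D$; under the standing assumption $k+1 \le |D|$ this is $k+1$. Therefore the number of agreements is at most $k+1$, giving $d(u,\mathcal{C}) \ge |D| - (k+1)$, and equality in the distance bound $d(u,\mathcal{C}) \le |D|-k-1$ forces some $v$ to achieve exactly $k+1$ agreements.

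Next I would establish the two directions. For the forward direction, suppose $d(u,\mathcal{C}) \le |D|-k-1$; combined with the general lower bound just derived, there is a $v(x)$ of degree $\le k-1$ such that $g(x) = u(x)-v(x)$ vanishes at exactly $k+1$ distinct points $x_{i_1},\dots,x_{i_{k+1}}$ of $D$. Since $g$ is monic of degree $k+1$ (the leading coefficient of $u(x)$ being $1$, or after normalizing) with these $k+1$ distinct roots, it must equal $\prod_{j=1}^{k+1}(x - x_{i_j})$ by comparing degrees and leading coefficients, which is exactly the claimed factorization. For the converse, if such a subset and such a $v$ exist, then $u(x)-v(x)$ agrees with $u$ on those $k+1$ coordinates, so the codeword given by $v$ differs from $u$ in at most $|D|-(k+1)$ places, yielding $d(u,\mathcal{C}) \le |D|-k-1$.

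The main point requiring care, and the one I would flag as the crux, is the leading-coefficient normalization: the statement writes the product as monic, so I must argue $u(x)$ can be taken monic of degree $k+1$ (or absorb the leading coefficient), and verify that subtracting $v(x)$ with $\deg v \le k-1$ leaves both the degree-$(k+1)$ and degree-$k$ coefficients of $u$ intact — the latter is what pins down the product form once all $k+1$ roots are known, though only the leading coefficient is strictly needed for the factorization itself. Beyond this bookkeeping the argument is routine, resting only on the elementary facts that Hamming agreements correspond to common roots and that a degree-$(k+1)$ polynomial with $k+1$ distinct roots factors completely. I would present the count of agreements as the organizing idea and keep the two implications short.
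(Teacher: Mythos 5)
Your proof is correct and is precisely the ``simple argument'' the paper invokes without writing out: agreements between $u$ and a codeword $v$ are roots of $u(x)-v(x)$, which has degree exactly $k+1$ since $\deg v(x)\leq k-1$, so $d(u,\mathcal{C})\leq |D|-k-1$ holds exactly when some difference $u(x)-v(x)$ has $k+1$ distinct roots in $D$ and hence splits as the stated product. Your flag about the leading coefficient is also apt: the monic factorization in the statement implicitly takes $u(x)$ monic (as the paper does in its application, writing $u(x)=x^{k+1}-b_1x^k+\cdots$), which is harmless since scaling $u$ by a nonzero constant preserves distance to the linear code $\mathcal{C}$.
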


\section{The Proof}

\begin{proof}
For the received word $u$, write the interpolated polynomial as
\begin{equation*}
u(x)=x^{k+1}-b_1 x^k + \ldots + (-1)^{k+1} b_{k+1}
\end{equation*}
By Lemma \ref{errorD}, if $u(x)$ is not a deep hole, then there exists a codeword $v(x)$ of degree $\leq k-1$ where
\begin{equation*}
u(x)-v(x) = (x-x_1)\cdots (x-x_k)(x-x_{k+1})
\end{equation*}
for distinct values of $x_1, \ldots, x_k, x_{k+1}$ in $D$. By expanding this product, we see that $u$ will not be a deep hole if and only if the equation
\begin{equation*}
x_1+\ldots+x_k+x_{k+1}=b_1
\end{equation*}
has a solution with distinct coordinates for every $b_1$ in $\mathbb{F}_q$. Let $N_u$ be the number of solutions to this equation and $G$ be the group of additive characters $\psi:\mathbb{F}_q\to \mathbb{C}^*$. By the orthogonality of characters,
\begin{equation*}
N_u=\frac{1}{q}\sum_{\substack{x_i\in D \\ \text{distinct}}} \sum_{\psi\in G} \psi(x_1+\ldots+x_k+x_{k+1}-b_1)
\end{equation*}
Removing the trivial character and exchanging the sums, we obtain 
$$
\left|N_u-\frac{1}{q}(|D|)_{k+1}\right| = \left|\frac{1}{q}  \sum_{\substack{\psi\in G \\ \psi\ne 1}} \psi(b_1)^{-1} \sum_{\substack{x_i\in D \\ \text{distinct}}}  \psi(x_1+\ldots+x_k+x_{k+1})\right| . 
$$
Now we can estimate the inner sum of the right hand side using Li-Wan's new sieve. Let $X=D^{k+1}$, $f(x_1,\ldots, x_k, x_{k+1}) = \psi(x_1 + \ldots + x_k + x_{k+1})=\psi(x_1)\cdots \psi(x_k) \psi(x_{k+1})$, so $F=\sum_{\mathbf{x}\in \overline{X}} f(\mathbf{x})$. $F$ is symmetric and normal. Then 
\begin{equation*}
F_\tau = \sum \psi(x_{11})\cdots \psi(x_{1c_1})\cdots \psi^{k+1}(x_{(k+1)c_1}) \cdots \psi^{k+1}(x_{(k+1) c_{k+1}})
\end{equation*}
 where the sum runs over $x_{s t_s}\in D$, $1\leq s\leq k+1$, and $1\leq t_s\leq c_s$. Applying Lemma \ref{dicksonWeilEven}, we obtain 
\begin{align*}
\left|N_u-\frac{1}{q}(|D|)_{k+1}\right| &\leq \left| \sum_{\substack{x_i\in D \\ \text{distinct}}}  \psi(x_1+\ldots+x_k+x_{k+1})\right| \\
&\leq \sum_{\sum ic_i=k+1} N(c_1, \ldots, c_{k+1}) |F_\tau| \\
&\leq C_{k+1}((n+1)\sqrt{q},|D|,(n+1)\sqrt{q},|D|,\ldots,(n+1)\sqrt{q},|D|) \\
&\leq \left( (n+1)\sqrt{q}+(k+1)+\frac{|D|-(n+1)\sqrt{q}}{2}-1\right)_{k+1} \\
&= \left( \frac{(n+1)\sqrt{q}}{2} + k + \frac{|D|}{2} \right)_{k+1}. 
\end{align*}
To guarantee that $N_u>0$, it suffices to have
\begin{equation*}
\frac{1}{q}(|D|)_{k+1} >  \left( \frac{(n+1)\sqrt{q}}{2} + k + \frac{|D|}{2} \right)_{k+1}, 
\end{equation*}
which is true if
\begin{equation*}
\frac{|D|}{\frac{n+1}{2}\sqrt{q}+k+\frac{|D|}{2}} > q^{\frac{1}{k+1}}. 
\end{equation*}
If we take $\dfrac{n+1}{2}\sqrt{q} < c_1 |D|$ and $k<c_2 |D|$ for positive constants $c_1$ and $c_2$, we calculate the condition
\begin{equation*}
q^{-\frac{1}{k+1}}-\frac{1}{2} > c_1+c_2. 
\end{equation*}
Therefore, it is enough to find
\begin{equation*}
q^{-\frac{1}{k+1}} > \frac{1}{2}. 
\end{equation*}
With some rearrangement, we have
\begin{equation*}
k > (\log_2 q) - 1, 
\end{equation*}
or more simply,
\begin{equation*}
k \geq \log_2 q. 
\end{equation*}
\end{proof}

\section{Conclusions}

We were able to show that words of degree $k+1$ were not deep holes in certain Reed-Solomon codes based on Dickson polynomials. We reduced to solving a type of restricted subset sum problem. If $D$ is the value set of $D_n(x,a)$ over some finite field $\mathbb{F}_q$, then this problem essentially asks to find conditions involving $q$, $n$, and a parameter $r$ such that
\begin{equation*}
x_1+\ldots+x_r=c
\end{equation*}
always has a solution using distinct $x_i\in D$, for any $c\in \mathbb{F}_q$. It is interesting to note that this is actually a harder version of Waring's problem for Dickson polynomials, which was originally studied without a distinctness condition by Gomez and Winterhof \cite{GW} and improved later by Ostafe and Shparlinski \cite{OS}. If we write $x_i=D_n(u_i,a)$ for suitable $u_i\in \mathbb{F}_q$, then our techniques give conditions to guarantee a solution to
\begin{equation*}
D_n(u_1,a) + \ldots + D_n(u_r,a)=c
\end{equation*}
where we require that each value $D_n(u_i,a)$ is distinct.

In a larger scope, this work was actually motivated by an attempt to use the stronger result from Keti-Zhu in \cite{KWZ}. As summarised in the introduction, it allows one to bound the error distance between a received word and code if the character sum bound
\begin{equation*}
\left|\sum_{b\in D} \chi(1-bx)\right|\leq K q^{1/2}
\end{equation*}
can be achieved for a suitable $K$, for any non-trivial  $\chi$ belonging to a particular character group. We leave this to a future investigation. 

\section*{References}

\end{document}